\newcommand{\esp}{\mathbb{E}}
\newcommand{\prob}{\mathbb{P}}
\newcommand{\re}{\mathbb{R}}
\DeclareMathOperator*{\RE}{RE}
\def\bfTheta{\boldsymbol{\Theta}}
\def\bfSigma{\boldsymbol{\Sigma}}
\def\bfA{\mathbf{A}}
\def\bfB{\mathbf{B}}
\def\bfE{\mathbf{E}}
\def\bfI{\mathbf{I}}
\def\bfM{\mathbf{M}}
\def\bfX{\mathbf{X}}
\def\bfY{\mathbf{Y}}
\def\bb{\boldsymbol b}
\def\by{\boldsymbol y}
\def\bu{\boldsymbol u}
\def\bv{\boldsymbol v}
\def\bw{\boldsymbol w}
\def\bmu{\boldsymbol\mu}
\def\btheta{\boldsymbol\theta}
\def\calC{\mathcal C}
\def\calN{\mathcal N}
\newcommand{\vertiii}[1]{{\left\vert\kern-0.4ex #1 
\kern-0.4ex\right\vert}}
\theoremstyle{plain}
\newtheorem{definition}{Definition}
\newtheorem{lemma}{Lemma}
\newtheorem{thm}{Theorem}
\newtheorem{cor}{Corollary}
\newtheorem{assump}{Assumption}
\theoremstyle{definition}
\newtheorem{example}{Example}
\begin{document}

\title{Restricted eigenvalue property for corrupted Gaussian designs}

\author{Philip Thompson\footnote{Philip Thompson is supported by a Fondation Math\'ematique Jacques Hadamard post-doctoral fellowship. This work was supported by a public grant as part of the Investissement d'avenir project, reference ANR-11-LABX-0056-LMH, LabEx LMH.} \quad and \quad Arnak S. Dalalyan\\
 Centre de Recherche en \'Economie et Statistique (CREST-ENSAE) \\ 
 \href{}{Philip.THOMPSON@ensae.fr}\\
 \href{}{arnak.dalalyan@ensae.fr}
}
\date{}
\maketitle

\begin{abstract}
Motivated by the construction of tractable robust estimators via convex relaxations, we present conditions on the sample size which guarantee an augmented notion of Restricted Eigenvalue-type condition for Gaussian designs. Such a notion is suitable for 
high-dimensional robust inference in a Gaussian linear model and a multivariate 
Gaussian model when samples are corrupted by outliers either in the response variable or in the design matrix. Our proof technique relies on simultaneous lower and upper bounds of two random bilinear forms with very different behaviors. Such simultaneous bounds are used for balancing the interaction between the parameter vector and the estimated corruption vector as well as for controlling the presence of corruption in the design. Our technique has the advantage of not relying on known bounds of the extreme singular values of the associated Gaussian ensemble nor on the use of mutual incoherence arguments. A relevant consequence of our analysis, compared to prior work, is that a significantly sharper restricted eigenvalue constant can be obtained under weaker assumptions. In particular, the sparsity of the unknown parameter and the number of outliers are allowed to be completely independent of each other. 
\end{abstract}

\section{Introduction}

As it is widely known, high-dimensional inference problems suffer from the curse of dimensionality in the sense that the sample size $n$ is much smaller than the dimension of the parameter (for which ones wishes to estimate according to certain risk measure). However, under sparsity assumptions, a celebrated methodology is to enforce variable selection over a lower dimensional subspace by using convex relaxations. Celebrated examples are the Lasso and Dantzig estimators via the $\ell_1$-norm \citep{tibshirani1996,candes:tao2007} and matrix estimation problems using the nuclear norm 
\citep{candes:recht2009}. A highlight of the convex relaxation approach is that the corresponding estimator can be efficiently computed even for large-scale optimization problems. It should be noted, however, that certain assumptions on the data must be met in order for such approaches to work. 

Perhaps the most common assumption for sparse recovery is 
the \emph{restricted eigenvalue} (RE) condition \citep{BRT}. Given a set 
${S}\subset[p]:=\{1,\ldots,p\}$ and a constant $c>0$, we first 
define the dimension reduction cone 
$$
{\calC}_{{S}}(c):=\big\{\bb\in\re^p:\Vert \bb_{{S}^c}\Vert_1\le c\Vert 
\bb_{{S}}\Vert_1\big\}.
$$
Hereafter $\bb_{S}$ is the vector obtained from $\bb$ by zeroing its coordinates 
$i\notin{S}$. Let $\bfX\in\re^{n\times p}$ be some matrix. We say the 
$\RE_{{S}}(c)$ property holds for $\bfX$ with constant $\kappa>0$ if 
\begin{align}
\Vert\bfX \bv\Vert_{2}\ge\kappa\Vert \bv\Vert_2,
\quad\quad
\forall \bv\in{\calC}_{{S}}(c),
\label{equation:RE:standard}
\end{align}
If the above property holds for all ${S}\subset[p]$ of size 
$|{S}|=s$, we say the $\RE_{s}(c)$ property holds for $\bfX$ 
with constant $\kappa>0$. 
It turns out that such a property is satisfied with high-probability for many
families of random matrices $\bfX$, for example when the rows of $\bfX$ are 
independent and identically distributed Gaussian vectors 
\citep{candes2008,raskutti:wainwright:yu2010}. 

On the other hand, in statistics and machine learning we are often faced with the presence of \emph{outliers} in data, a framework pertaining to the field of 
\emph{robust statistics} \citep{huber:ronchetti2009}. In linear regression, for
instance, outliers can affect not only the labels \citep{nguyen:tran2013} but also the features \citep{balmand:dalalyan2015}. The outliers can be modeled as deterministic 
or random, drawn from some distribution. Of course, the latter is not known in practice. 

Taking into consideration the aforementioned features and having in mind the high-dimensional setting, a recent successful approach consists in modeling
the outliers using a sparse array (vector or matrix). This means that an array characterizing the corruption by the outlier is introduced, which has $|O|$-sparse, 
where $O$ is the set of outliers and $|O|$ is its cardinality. We refer, e.g., to 
\citep{Loh2018,candes:randall2008,dalalyan:keriven2009,dalalyan:keriven2012,dalalyan:chen2012,nguyen:tran2013,tarr:muller:weber2016,balmand:dalalyan2015}. In some of these works, the constructed estimators use the convex relaxation methodology for designing computationally tractable robust estimators. 
Tight risk bounds for these estimators are obtained under the assumption that
a version of RE condition, hereafter referred to as \emph{augmented RE condition}
(see \Cref{ss:definitions} for the precise definition), holds. A fundamental question is how to ensure that the \emph{augmented RE condition} is valid with high probability 
for random design matrices. 

The main purpose of this work is to obtain such guarantees for a broad class of corrupted Gaussian models studied in prior work (e.g. \citep{dalalyan:chen2012,nguyen:tran2013,balmand:dalalyan2015}). Besides contamination, either in the response variable or the design matrix, we are interested in ensuring the RE property in the typical statistical framework where the covariates can be correlated and the design is not under control of the statistician. As it will be specified later (see Section \ref{s:results}), our guarantees for the RE condition are stronger than given in previous work 
\citep{nguyen:tran2013} in the context of linear regression models with corruption only on the response variable. In a nutshell, our improvements in this set-up are as follows: (1) we do not assume any relation between the sparsity level of the parameter and the corruption and (2) our RE constant is significantly sharper (in at least one order of magnitude) and valid for smaller sample sizes. Moreover, we are not aware of previous results ensuring the augmented RE property for models with contamination in the design matrix \citep{balmand:dalalyan2015}. Our results are also relevant for multivariate regression problems where one wishes to estimate a matrix parameter. In this setting, a proper RE condition over the space of matrices needs to be guaranteed, a result which seems to be new. We refer to the following Examples \ref{example:robust:lasso}-\ref{example:robust:precision} and to Remarks \ref{rem:robust:lasso}-\ref{rem:robust:precision:matrix} in Section \ref{s:results} for a more detailed discussion. 

\section{Notations and definitions}
\label{ss:definitions}

We first present some additional notations. Given $\bb\in\re^p$ and a subset 
$S\subset[p]:=\{1,\ldots,p\}$, $b_{S}$ is the vector obtained from $\bb$ 
by zeroing the coordinates in $[p]\setminus{S}$. As usual, for $q\in(0,\infty]$, $\Vert \bb\Vert_q$ will denote the $\ell_q$-norm of $\bb$. For every matrix $\bfA \in\re^{n\times p}$, we shall denote by $\bfA _{\bullet,j}$ the $j$-th column of $\bfA $ and by $\bfA _{i,\bullet}$ its $i$-th row. More generally, if $O$ is a subset of $[n]=\{1,\ldots,n\}$, then $\bfA _{O,\bullet}\in\re^{|O|\times p}$ will denote the matrix defined by suppressing from $\bfA$ the rows corresponding to indices $i\in[n]\setminus O$. 
A similar notation is used for the columns. For two matrices $\bfB$ and $\bfTheta$
having the same number of columns, we write $[\bfB;\bfTheta]$ for the matrix
obtained by vertical concatenation of $\bfB$ and $\bfTheta$.

Let $\mathcal{J}=\{J_j:j\in[p]\}$ be a collection of $p$ subsets $J_j\subset[p]$ 
and let $|\mathcal{J}|:=\sum_{j=1}^p|J_j|$. With a slight abuse of notation, let 
us write $\mathcal{J}^c$ for the collection $\{J_j^c:j\in[p]\}$. Then, for every 
matrix $\bfA \in\re^{p\times p}$, we define $\bfA _{\mathcal{J}}\in\re^{p\times p}$
as the matrix obtained from $\bfA $ by zeroing all elements $\bfA _{i,j}$ such that 
$i\notin J_j$. For matrix mixed-norms, we shall use the following notation: for any
matrix $\bfA \in\re^{n\times p}$ and $q_1,q_2\in(0,\infty]$,
$$
\Vert\bfA \Vert_{q_1,q_2}:=\Big({\sum_{i=1}^n\Vert\bfA _{i,\bullet}\Vert_{q_1}^{q_2}}\Big)^{1/q_2}.
$$ 
The smallest and the largest singular values of $\bfA $ will be denoted by 
$\sigma_{\min}(\bfA )$ and $\sigma_{\max}(\bfA):=\|\bfA\|$, respectively. To ease 
notation, we also define the normalized matrix $\bfA^{(n)}:=
{\bfA }/{\sqrt{n}}$ for any matrix $\bfA$ having $n$ rows. Finally, $\bfA^{\dagger}$ 
will denote the pseudo-inverse of the matrix $\bfA$ and $\bfA^{\dagger/2}$ is
the pseudo-inverse of the matrix $\bfA^{1/2}$ (for a positive semidefinite 
matrix $\bfA$).

We present now the definitions of the (augmented-) dimension-reduction cone and 
the augmented restricted eigenvalue property by starting by the vector case and then
extending these notions to matrices.

\begin{definition}[Augmented RE-condition for vectors]\label{def:RE:vectors}
Let  $\gamma>0$, $c>1$, ${S}\subset[p]$ and $O\subset[n]$. We define the 
\emph{augmented dimension reduction cone} ${\calC}_{{S},O}
(c,\gamma)$ as the set of all vectors $\begin{bmatrix}\bb;\btheta\end{bmatrix}
\in\re^{p+n}$ satisfying
\begin{align}
\gamma\Vert\bb_{{S}^c}\Vert_{1}+\Vert\btheta_{O^c}\Vert_{1}
\le c \left(\gamma\Vert \bb_{{S}}\Vert_{1}+\Vert\btheta_{O}\Vert_1\right).
\end{align}
We say that the $n\times p$ matrix $\bfM$ satisfies the augmented RE condition
$\RE_{{S},O}(c,\gamma)$ with constant $\kappa>0$ if
\begin{align}
\Vert\bfM \bb-\btheta\Vert_{2}\ge\kappa\big\|\begin{bmatrix}\bb;\btheta\end{bmatrix}
\big\|_2,
\quad 
\forall \begin{bmatrix}\bb;\btheta\end{bmatrix}
\in{\calC}_{{S},O}(c,\gamma).
\end{align}
If the above property is holds for all ${S}\subset[p]$ and $O\subset[n]$ 
of size $|{S}|=s$ and $|O|=o$, we say that $\bfM$ satisfies the 
$\RE_{s,o}(c,\gamma)$ condition with constant $\kappa>0$.
\end{definition}

By replacing $\btheta$ by zero, one easily checks that the augmented
RE is stronger than the standard RE condition. Since the Gaussian matrices
are known to satisfy the standard RE condition, it is an appealing question
to check whether they satisfy the (stronger) augmented RE condition.

We now present the definition of the augmented RE condition over 
$[\bfB;{\bfTheta}]\in\re^{(p+n)\times p}$. 

\begin{definition}[Augmented RE-condition for matrices]\label{def:RE}
Let $c>1$, $\gamma>0$, $O\subset[n]$ and $\mathcal{J}=\{J_j:j\in[p]\}$ 
be a collection of $p$ subsets $J_j\subset[p]$.  The \emph{augmented dimension 
reduction cone} ${\calC}_{\mathcal{J},O}(c,\gamma)$ 
is the set of all matrices  $[\bfB;\bfTheta]\in\re^{(p+n)\times p}$  such that
\begin{align}
\gamma\Vert\bfB_{\mathcal{J}^c}\Vert_{1,1}
+\Vert{\bfTheta}_{O^c,\bullet}\Vert_{2,1}
\le c \Big\{\gamma\Vert\bfB_{\mathcal{J}}\Vert_{1,1}
+\Vert{\bfTheta}_{O,\bullet}\Vert_{2,1}\Big\}.
\end{align}
We say that an $n\times p$ matrix $\bfM$ satisfies the 
$\RE_{\mathcal{J},O}(c,\gamma)$ condition with constant 
$\kappa>0$ if
\begin{align}
\Vert\bfM\bfB-\bfTheta\Vert_{2,2}\ge\kappa\left(\left\Vert\bfB\right\Vert_{2,2}\vee\left\Vert{\bfTheta}\right\Vert_{2,2}\right),
\end{align}
for all $[\bfB;\bfTheta]\in{\calC}_{\mathcal{J},O}(c,\gamma)$. 
If the above property holds over all supports 
$\mathcal{J}=\{J_j:j\in[p], J_j\subset[p]\}$ of $[p]\times[p]$ and $O\subset[n]$ with sizes $|\mathcal{J}|:=\sum_j |J_j|=s$ and $|O|=o$, we say that $\bfM$ satisfies the 
$\RE_{s,o}^{\rm mat}(c,\gamma)$ condition with constant $\kappa>0$.
\end{definition}
The application in mind of the previous definition is of 
\Cref{example:robust:precision} where $\bfB^*$ is $\mathcal{J}$-sparse 
and the collection  $O\subset[n]$ of nonzero rows of ${\bfTheta}^*$ 
is sparse. Note that \Cref{def:RE} differs from \Cref{def:RE:vectors} 
with respect to norm of corruption vector ${\bfTheta}$ in the sense 
that a mixed $\ell_2$/$\ell_1$-norm is used. We refer to \citep{balmand:dalalyan2015} for the 
motivation on this regard. 

A major difference between the standard RE condition \eqref{equation:RE:standard} and the augmented version in \Cref{def:RE:vectors} is that the associated dimension reduction cone allows a degree of freedom between the inclinations for 
$\bb$ and $\btheta$. This is encoded in the additional parameter $\gamma$. In particular, the augmented dimension reduction cone in $\re^{p+n}$ is possibly much larger than the Cartesian product of the dimension reduction cones 
\begin{align}
{\calC}_{{S}}(c_1)&:=\left\{\bb\in\re^{p}:
\Vert\bb_{{S}^c}\Vert_{1}\le c_1\Vert\bb_{{S}}\Vert_{1}
\right\}\\
{\calC}_{O}(c_2)&:=\left\{\btheta\in\re^{n}:
\Vert\btheta_{O^c}\Vert_{1}\le c_2\Vert\btheta_{O}\Vert_{1}\right\},
\end{align}
for given $c_1,c_2>1$. Hence, the augmented RE condition is stronger 
than the ``naive'' RE-type condition we can obtain by replacing in 
\Cref{def:RE:vectors} the set ${\calC}_{{S},O}(c,\gamma)$ 
by ${\calC}_{{S}}(c_1)\times {\calC}_{O}(c_2)$.
These are important observations since, in some situations studied in prior work, 
for estimators $(\widehat\bb,\widehat \btheta)$ of unknown parameters $(\bb^*,\btheta^*)
$ it is only proved that the augmented error $(\widehat \bb-\bb^*,\widehat\btheta-
\btheta^*)$ belongs to the large cone of \Cref{def:RE:vectors} with different penalization factors for the coordinates $\bb$ and $\btheta$. The same comments apply to \Cref{def:RE}.  We refer to \citep{nguyen:tran2013,balmand:dalalyan2015} and to 
\Cref{rem:robust:lasso} and \Cref{rem:robust:precision:matrix} for details.

The rest of the paper is organized as follows. We start by two examples 
described in \Cref{sec:examples} that explain our interest for the augmented
RE condition. In \Cref{s:results}, we present our main results for random 
Gaussian matrices along with their consequences on the validity of the augmented 
RE condition. The comparisons with previous work on robust estimation are discussed 
in \Cref{sec:cons}. Theorems stated in \Cref{s:results} are user-friendly versions
of more general results, which are stated in \Cref{sec:full_thms}. 
We provide in \Cref{sec:high_level_proof} a high-level overview of the proofs.
The detailed proofs are postponed to the supplementary material. 

\section{Motivating examples}\label{sec:examples}

To motivate our framework, we give some background on two models studied in the literature on robust estimation \citep{nguyen:tran2013,balmand:dalalyan2015} by convex programming.

\begin{example}[Robust Lasso]\label{example:robust:lasso}
Let $\bfX$ be a $n\times p$ matrix and $\bfX^{(n)}$ be its normalized version. 
In \citep{nguyen:tran2013}, the authors analyze the following linear regression
$$
\by = \bfX^{(n)} \bb^*-\btheta^*+\bw,
$$
where the rows of $\bfX$ are i.i.d.\ ${\calN}_p(\mathbf 0,{\bfSigma})$ 
random vectors, $\btheta^*\in\re^n$ is the contamination and $\bw$ is a 
centered Gaussian noise. The contamination is such that the set 
$O:=\{i:\theta_i\neq 0\}\subset[n]=\{1,\ldots,n\}$ of nonzero coordinates 
is unknown but small. The dimension $p$ is assumed to be large, possibly larger
than $n$. It is also assumed that 
${\bfSigma}\succ0$, i.e., that ${\bfSigma}$ is non-singular.  

As in the standard Lasso \citep{tibshirani1996} corresponding to $\btheta^*=\mathbf 0$, 
the ultimate goal is to estimate the parameter $\bb^*\in\re^p$ 
based solely on the data $(\by,\bfX)$, when $\bb^*$ is supposed to be sparse. 
The proposal of \citep{nguyen:tran2013} is to jointly estimate $(\bb^*,\btheta^*)$ via a robust 
version of the Lasso. The theory of robust sparse recovery for such 
a method relies on the augmented RE condition for $\bfX^{(n)}$.

In \citep{nguyen:tran2013}, an augmented RE-type condition for $\bfX^{(n)}$ is shown 
to hold with high probability under stringent conditions (see \eqref{equation:s:O:constraint} in \Cref{rem:robust:lasso}). One of the 
contributions of this work is to ensure the augmented RE property holds 
under weaker conditions, for a wide class of Gaussian designs. Moreover, 
we obtain much better constants. As commented later, the augmented 
RE constant we get is significantly larger than the one in \citep{nguyen:tran2013} 
(see \Cref{rem:robust:lasso}). As opposed to \citep{nguyen:tran2013}, we include the 
setting where not only the response $\by$ is corrupted but also 
the original design matrix $\bfX$ is subject to contamination. 
Interestingly, in the case of centered Gaussian contamination of the 
design, the obtained augmented RE property is ``essentially'' 
unchanged up to constants independent of $p$.   
\end{example}

\begin{example}[Robust precision matrix estimation]\label{example:robust:precision}
In \citep{balmand:dalalyan2015}, the authors consider the following inference model, which consists 
of $n$ vectors of dimension $p$ gathered in a $n\times p$ matrix $\bfX$ that 
can be represented as 
\begin{equation}
\bfX=\bfY+\bfE,
\label{equation:model}
\end{equation}
where $\bfY\in\re^{n\times p}$ is a random matrix with i.i.d. ${\calN}_p(
\mathbf 0,{\bfSigma})$ rows and $\bfE\in\re^{n\times p}$ is the corruption matrix. Here, the population covariance matrix ${\bfSigma}\in\re^{p\times p}$ and the corruption matrix are supposed to be unknown. The nonzero rows of $\bfE$, corresponding
to corrupted rows of $\bfX$, belong to a small set $O\subset[n]$. 

Assuming ${\bfSigma}$ is nonsingular, \citep{balmand:dalalyan2015} proposes a method for estimating the precision matrix $\mathbf{\Omega}^*:={\bfSigma}^{-1}$, which is relevant for inference in graphical models \citep{meinshausen:buhlmann2006}. The 
estimator proposed in \citep{balmand:dalalyan2015} is shown to 
have a small error provided that $\bfX^{(n)}$ satisfies the augmented RE condition.
We show below that such a property holds with high probability assuming only that 
${\bfSigma}$ is non-singular (see \Cref{rem:robust:precision:matrix} below). 
\end{example}

\section{Main results}\label{s:results}

The following general theorems are the key results for establishing 
the augmented RE condition for the design $\bfX^{(n)}$. For increased 
generality and a larger scope of applicability, we allow the contamination 
to have a random component. More precisely, our contamination model is as follows. 

\begin{assump}[Random contamination model]\label{assump:random:contamination:model}
Suppose \eqref{equation:model} holds, where $\bfY\in\re^{n\times p}$ 
is a random matrix with i.i.d.\ ${\calN}_p(\mathbf 0,{\bfSigma})$ rows. 
Assume $\bfE:=\bfE_{\mathcal{D}}+\bfE_{\mathcal{R}}$, where 
$\bfE_{\mathcal{D}}\in\re^{n\times p}$ is an arbitrary matrix\footnote{In the 
applications in robust estimation the matrix $\bfE_{\mathcal{D}}$ is assumed
to be row-sparse. But such  a condition is not needed for proving RE-type 
conditions.} and $\bfE_{\mathcal{R}}\in\re^{n\times p}$ is a random matrix 
with i.i.d.\ ${\calN}_p(\mathbf 0,{\bfSigma}_{E})$ rows, where ${\bfSigma}_{E}\in\re^{p\times p}$. $\bfE_{\mathcal{R}}$ is independent of $\bfY$. If $\bfE_{\mathcal{D}}\neq0$, we suppose ${\bfSigma}$ is non-singular.
\end{assump}
Although stated in slightly more general form for brevity of presentation, we are mainly interested in two set-ups in Assumption \ref{assump:random:contamination:model}. The first is the set-up $\bfE_{\mathcal{D}}\ne0$ and $\bfE_{\mathcal{R}}=0$ which includes a general deterministic or non-Gaussian contamination model. The second is the case where the rows of the design are corrupted by (possibly non-centered) Gaussian 
${\calN}_p(\bmu_E,{\bfSigma}_{E})$ outliers. This is the case where, for every $i\in O$ and for some $\bmu_E\in\re^p$, $(\bfE_{\mathcal{D}})_{i,\bullet}=\bmu_E$. As discussed later, our obtained results are sharper for Gaussian contamination. This is the main reason for considering two cases.  

In the sequel, it will be useful to define some quantities. We define the matrices
$$
\bfX_{\mathcal{R}}:=\bfY+\bfE_{\mathcal{R}},
\quad\quad
{\bfSigma}_S:={\bfSigma}+{\bfSigma}_{\bfE}.
$$
Set 
$$
\varrho({\bfSigma}_S):=\Big\{\max_{j\in[p]}({\bfSigma}_S)_{jj}\Big\}^{1/2}.
$$ 
\begin{thm}[The vector case]\label{theorem:special:vectors}
Suppose \Cref{assump:random:contamination:model} holds. 
For all $n\ge 208$, with probability at least 
$1-2\exp(-n/297)$, for every $[\bb;\btheta]\in\re^{p+n}$ we have
\begin{align}
\big\Vert\bfX^{(n)}\bb-\btheta\big\Vert_2&\ge
\big(0.24-\|\bfE^{(n)}_{\mathcal{D}}{\bfSigma}_S^{\dagger/2}\|\big)
\big\Vert\big[{\bfSigma}^{1/2}_S\bb;\btheta\big]\big\Vert_2\\
&-36\varrho({\bfSigma}_S)\Vert 
\bb\Vert_1\sqrt{\nicefrac{\log p}{n}}\\ 
&- 33\Vert\btheta\Vert_1\sqrt{\nicefrac{\log n}{n}}.
\end{align}
\end{thm}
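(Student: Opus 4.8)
The plan is to first peel off the deterministic corruption $\bfE_{\mathcal D}$ and reduce everything to a pure Gaussian design, and then to prove the core inequality for that design by a Gaussian min--max (Gordon) comparison. Writing $\bfX=\bfX_{\mathcal R}+\bfE_{\mathcal D}$, the triangle inequality gives $\|\bfX^{(n)}\bb-\btheta\|_2\ge\|\bfX_{\mathcal R}^{(n)}\bb-\btheta\|_2-\|\bfE_{\mathcal D}^{(n)}\bb\|_2$, and since $\bfE_{\mathcal D}\ne0$ forces $\bfSigma$, hence $\bfSigma_S$, to be nonsingular, I can insert $\bfSigma_S^{\dagger/2}\bfSigma_S^{1/2}=\id$ to get $\|\bfE_{\mathcal D}^{(n)}\bb\|_2\le\|\bfE_{\mathcal D}^{(n)}\bfSigma_S^{\dagger/2}\|\,\|\bfSigma_S^{1/2}\bb\|_2\le\|\bfE_{\mathcal D}^{(n)}\bfSigma_S^{\dagger/2}\|\,\|[\bfSigma_S^{1/2}\bb;\btheta]\|_2$. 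Hence it suffices to prove, with the stated probability, the core bound $\|\bfX_{\mathcal R}^{(n)}\bb-\btheta\|_2\ge 0.24\,\|[\bfSigma_S^{1/2}\bb;\btheta]\|_2-\tau_1\|\bb\|_1-\tau_2\|\btheta\|_1$ for all $[\bb;\btheta]$, where $\tau_1:=36\varrho(\bfSigma_S)\sqrt{\log p/n}$ and $\tau_2:=33\sqrt{\log n/n}$. Since the rows of $\bfX_{\mathcal R}=\bfY+\bfE_{\mathcal R}$ are i.i.d.\ $\calN_p(\mathbf 0,\bfSigma_S)$, I may write $\bfX_{\mathcal R}=\mathbf{G}\bfSigma_S^{1/2}$ with $\mathbf{G}\in\re^{n\times p}$ having i.i.d.\ standard normal entries, so that $\bfX_{\mathcal R}\bb=\mathbf{G}\bu$ with $\bu:=\bfSigma_S^{1/2}\bb$.

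By homogeneity it is enough to establish the core bound on the sphere $\|[\bu;\btheta]\|_2=1$, i.e.\ to show $\inf\{\|\tfrac1{\sqrt n}\mathbf{G}\bu-\btheta\|_2+\tau_1\|\bb\|_1+\tau_2\|\btheta\|_1\}\ge 0.24$ over that sphere. Representing the Euclidean norm variationally, $\|\tfrac1{\sqrt n}\mathbf{G}\bu-\btheta\|_2=\max_{\|\bw\|_2\le1}(\tfrac1{\sqrt n}\bw^\top\mathbf{G}\bu-\bw^\top\btheta)$, turns the left-hand side into a min--max of the bilinear form $\bw^\top\mathbf{G}\bu$ plus terms not involving $\mathbf{G}$. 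I would then invoke Gordon's (one-sided) Gaussian comparison inequality, which bounds the lower tail of this min--max by that of the auxiliary min--max in which $\bw^\top\mathbf{G}\bu$ is replaced by $\|\bu\|_2\,\bw^\top\boldsymbol g+\|\bw\|_2\,\boldsymbol h^\top\bu$, for independent standard Gaussian vectors $\boldsymbol g\in\re^n$ and $\boldsymbol h\in\re^p$. These are the two random bilinear forms of the introduction, and the whole argument reduces to lower bounding the auxiliary problem.

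In the auxiliary problem I lower bound the inner maximization over $\bw$ by testing the unit vector aligned with $\tfrac1{\sqrt n}\|\bu\|_2\boldsymbol g-\btheta$, which leaves at least $\|\tfrac{\|\bu\|_2}{\sqrt n}\boldsymbol g-\btheta\|_2+\tfrac1{\sqrt n}\boldsymbol h^\top\bu+\tau_1\|\bb\|_1+\tau_2\|\btheta\|_1$. The term $\tfrac1{\sqrt n}\boldsymbol h^\top\bu=\tfrac1{\sqrt n}(\bfSigma_S^{1/2}\boldsymbol h)^\top\bb$ is handled by the first bilinear form: since $(\bfSigma_S^{1/2}\boldsymbol h)_j\sim\calN(0,(\bfSigma_S)_{jj})$ with $(\bfSigma_S)_{jj}\le\varrho(\bfSigma_S)^2$, a Gaussian maximal inequality over the $p$ coordinates gives $\tfrac1{\sqrt n}\|\bfSigma_S^{1/2}\boldsymbol h\|_\infty\lesssim\varrho(\bfSigma_S)\sqrt{\log p/n}$ with high probability, so that $|\tfrac1{\sqrt n}\boldsymbol h^\top\bu|\le\tfrac1{\sqrt n}\|\bfSigma_S^{1/2}\boldsymbol h\|_\infty\|\bb\|_1$ is dominated by $\tau_1\|\bb\|_1$ once the constant $36$ is chosen to absorb the maximal-inequality constant. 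After this absorption the auxiliary bound becomes $\min\{\|\tfrac{a}{\sqrt n}\boldsymbol g-\btheta\|_2+\tau_2\|\btheta\|_1\}$ over $a:=\|\bu\|_2$ and $\btheta$ with $a^2+\|\btheta\|_2^2=1$; this is the second bilinear form. Testing against $\bw=\boldsymbol g/\|\boldsymbol g\|_2$ yields $\|\tfrac{a}{\sqrt n}\boldsymbol g-\btheta\|_2\ge\tfrac{a}{\sqrt n}\|\boldsymbol g\|_2-\tfrac{\|\boldsymbol g\|_\infty}{\|\boldsymbol g\|_2}\|\btheta\|_1$, and together with the complementary estimate $\|\tfrac a{\sqrt n}\boldsymbol g-\btheta\|_2\ge\|\btheta\|_2-\tfrac a{\sqrt n}\|\boldsymbol g\|_2$, the concentration bounds $\|\boldsymbol g\|_2\approx\sqrt n$ and $\|\boldsymbol g\|_\infty\lesssim\sqrt{\log n}$ reduce the question to minimizing $\max(c_1 a,\ \sqrt{1-a^2}-c_2 a)$ over $a\in[0,1]$, the spread of $\boldsymbol g$ forcing any near-cancelling $\btheta$ to have large $\ell_1$-norm and hence to be penalized by $\tau_2\|\btheta\|_1$. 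The value of this scalar minimization, once the constant $33$ is chosen so that $\tau_2$ dominates $\|\boldsymbol g\|_\infty/\|\boldsymbol g\|_2$, is the source of the constant $0.24$.

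Finally I would upgrade the resulting bounds from typical value to the claimed probability $1-2\exp(-n/297)$: the factor $2$ is exactly the cost of Gordon's comparison, and the exponent comes from Gaussian concentration of the Lipschitz functionals $\|\boldsymbol g\|_2$, $\|\boldsymbol g\|_\infty$ and $\|\bfSigma_S^{1/2}\boldsymbol h\|_\infty$, the threshold $n\ge 208$ being what is needed for the finite-sample deviations to leave enough slack for the stated constants. The main obstacle is precisely this quantitative bookkeeping: applying Gordon's inequality rigorously in the presence of the $\ell_1$-penalties and the sphere constraint, so that the functional is genuinely of the comparison form, and then tracking all concentration constants tightly enough through the two-bilinear-form analysis to land on the sharp values $0.24$, $36$ and $33$ rather than merely on unspecified absolute constants.
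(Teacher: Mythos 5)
Your overall strategy is sound and, at the level of ideas, genuinely different from the paper's. You propose a single global Gaussian min--max (Gordon) comparison on the augmented ellipse $\{\Vert\bfSigma_S^{1/2}\bb\Vert_2^2+\Vert\btheta\Vert_2^2=1\}$, with the $\ell_1$-terms carried as penalties inside the functional, so that uniformity over $[\bb;\btheta]$ is handled automatically by the deterministic auxiliary optimization in $(\boldsymbol g,\boldsymbol h)$. The paper instead (i) localizes to the set $V(r_1,r_2)$ with $\ell_1$-\emph{constraints}, (ii) splits $\Vert\bfX_{\mathcal R}^{(n)}\bb-\btheta\Vert_2^2$ into the quadratic form $\Vert\bfX_{\mathcal R}^{(n)}\bb\Vert_2^2+\Vert\btheta\Vert_2^2$ and the cross term $\bb^\top(\bfX_{\mathcal R}^{(n)})^\top\btheta$ via a case analysis on which of $\Vert\bfSigma_S^{1/2}\bb\Vert_2^2,\Vert\btheta\Vert_2^2$ exceeds $1/2$, (iii) lower-bounds the quadratic form with high probability by Gordon's inequality restricted to $V_1(\sqrt2 r_1)$ (with an equality constraint $\Vert\bfSigma_S^{1/2}\bb\Vert_2=1$), (iv) upper-bounds the cross term \emph{in expectation} by Sudakov--Fernique, (v) shows the whole optimum is an $n^{-1/2}$-Lipschitz function of the standard Gaussian ensemble and applies Gaussian concentration, and (vi) removes the $\ell_1$-constraints by an augmented peeling argument. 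Your route bypasses the splitting, the $2\sqrt{x}\le\lambda x+\lambda^{-1}$ bias--variance balancing, and the peeling lemma entirely, which is an attractive simplification; the decomposition of $\bfE_{\mathcal D}$ via $\|\bfE_{\mathcal D}^{(n)}\bfSigma_S^{\dagger/2}\|$ is identical in both arguments, and your scalar minimization of $\max(c_1a,\sqrt{1-a^2}-c_2a)$ plays the role of the paper's case analysis (A)/(B) and would plausibly yield a constant of the same order as $0.24$.

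There is, however, one concrete quantitative gap: the claimed failure probability $2\exp(-n/297)$ cannot be reached the way you handle the auxiliary problem. You condition on the events $\frac{1}{\sqrt n}\Vert\bfSigma_S^{1/2}\boldsymbol h\Vert_\infty\lesssim\varrho(\bfSigma_S)\sqrt{\log p/n}$ and $\Vert\boldsymbol g\Vert_\infty\lesssim\sqrt{\log n}$; the complements of these events have probability only polynomially small in $p$ and $n$ (roughly $p^{-c}$ and $n^{-c}$), not $e^{-cn}$, so after the factor $2$ from Gordon's comparison you obtain a bound of the form $1-2e^{-cn}-O(p^{-c'})-O(n^{-c''})$ rather than $1-2e^{-n/297}$. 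This is exactly the difficulty the paper's ``bias reduction'' step is designed to avoid: the dangerous $\ell_\infty$-type quantities are bounded only \emph{in expectation} (via Sudakov--Fernique), and the sole exponential tail comes from the Gaussian concentration of the full optimum, whose Lipschitz constant in the underlying standard Gaussian ensemble is $n^{-1/2}$ uniformly. To close your argument at the stated probability you would need the analogous move: bound $\esp$ of the auxiliary optimum (or of its deficit below the target value) and then use that the auxiliary optimum is itself an $O(n^{-1/2})$-Lipschitz function of $(\boldsymbol g,\boldsymbol h)$, rather than union-bounding over $\ell_\infty$-norm events. A second, more minor point: when $\bfSigma_S$ is singular the constraint set $\{\Vert\bfSigma_S^{1/2}\bb\Vert_2^2+\Vert\btheta\Vert_2^2=1\}$ is not compact, so the compactness hypothesis of the Gordon min--max theorem requires a preliminary restriction of $\bb$ to the orthogonal complement of $\ker\bfSigma_S$ (harmless, since the kernel component only increases $\Vert\bb\Vert_1$ and leaves $\bfX_{\mathcal R}\bb$ unchanged), which you should make explicit.
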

\Cref{theorem:special:vectors} generalizes Theorem 1 in 
\cite{raskutti:wainwright:yu2010} to the augmented space $\re^{p+n}$ under 
the design contamination model given by 
\eqref{equation:model} and Assumption \ref{assump:random:contamination:model}. 
It generalizes Theorem 1 in \cite{raskutti:wainwright:yu2010} in the sense that it can be used in the analysis of regression models corrupted by outliers either in the response variable 
and/or in the design matrix. In particular, when there either deterministic 
outilers or random outlier are absent, \textit{i.e.}, either $\bfE_{\mathcal D} 
= \mathbf 0$ or $\bfSigma_S = \mathbf 0$, then the inequality of 
\Cref{theorem:special:vectors} takes a much simpler form (very close to
the one of Theorem 1 in \cite{raskutti:wainwright:yu2010}), since $\|\bfE^{(n)}_{\mathcal{D}}
{\bfSigma}_S^{\dagger/2}\|=0$.

Besides guaranteeing an augmented RE condition, \Cref{theorem:special:vectors}  
provides an ``augmented transfer principle'' \citep{oliveira2013,oliveira2016,rudelson:zhou2013} in the setting of corrupted data with Gaussian design, which may be useful in other corrupted models. The core of the argument is to avoid singular values bounds of $\bfX$, or mutual incoherence properties between column-spaces of $\bfX$ and $\bfI_n$, and to extend the techniques of 
\cite{raskutti:wainwright:yu2010} to the mentioned set-up of contaminated data. A key strategy in that quest is to simultaneously obtain lower and upper bounds of two random bilinear forms with very different behaviors. We refer to \Cref{sec:high_level_proof} for a discussion on the challenges found in such argument.

Let us now state the corresponding result for matrices.

\begin{thm}[The matrix case]\label{theorem:special}
Suppose \Cref{assump:random:contamination:model} holds. For all
$n\ge 208$, with probability at least $1-2\exp(-n/297)$, for all 
$[\bfB;{\bfTheta}]\in\re^{(p+n)\times p}$, we have
\begin{align}
\big\Vert\bfX^{(n)}\bfB-\bfTheta\big\Vert_{2,2}
\ge& \big({0.24-\|\bfE_{\mathcal{D}}^{(n)}{\bfSigma}_S^{\dagger/2}\|}\big)
\left(\Vert{\bfSigma}^{1/2}_S\bfB\Vert_{2,2}\vee\Vert{\bfTheta}\Vert_{2,2}\right)\\
&-36\varrho({\bfSigma}_S)\Vert \bfB\Vert_{1,1}
\sqrt{\nicefrac{\log p}{n}}\\
&-33\Vert{\bfTheta}\Vert_{2,1}\sqrt{\nicefrac{\log n}{n}}.
\end{align}
\end{thm}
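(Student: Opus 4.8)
The plan is to deduce the matrix estimate directly from the vector estimate of \Cref{theorem:special:vectors} by decomposing every matrix into its $p$ columns and aggregating the resulting one-dimensional inequalities. The decisive structural point is that \Cref{theorem:special:vectors} holds, on a \emph{single} event $\Omega$ of probability at least $1-2\exp(-n/297)$, simultaneously for every vector $[\bb;\btheta]\in\re^{p+n}$. Hence no additional randomness and no union bound are needed to pass to matrices: I would simply work deterministically on $\Omega$ and apply the vector bound to each column $[\bfB_{\bullet,k};\bfTheta_{\bullet,k}]\in\re^{p+n}$, $k\in[p]$, of the concatenation $[\bfB;\bfTheta]$, noting that $\bfX^{(n)}\bfB_{\bullet,k}-\bfTheta_{\bullet,k}$ lies in $\re^n$ so the dimensions match the vector theorem exactly.

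First I would record the columnwise identities for the mixed norms. Because the Frobenius norm $\Vert\cdot\Vert_{2,2}$ is computed identically over rows or over columns, one has $\Vert\bfX^{(n)}\bfB-\bfTheta\Vert_{2,2}^2=\sum_{k=1}^p L_k^2$ with $L_k:=\Vert\bfX^{(n)}\bfB_{\bullet,k}-\bfTheta_{\bullet,k}\Vert_2$, and likewise $\Vert{\bfSigma}_S^{1/2}\bfB\Vert_{2,2}^2+\Vert\bfTheta\Vert_{2,2}^2=\sum_{k=1}^p R_k^2$ with $R_k:=\Vert[{\bfSigma}_S^{1/2}\bfB_{\bullet,k};\bfTheta_{\bullet,k}]\Vert_2$. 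Applying \Cref{theorem:special:vectors} to the $k$-th column gives, on $\Omega$,
\[
L_k\;\ge\;\tau\,R_k-36\varrho({\bfSigma}_S)\Vert\bfB_{\bullet,k}\Vert_1\sqrt{\tfrac{\log p}{n}}-33\Vert\bfTheta_{\bullet,k}\Vert_1\sqrt{\tfrac{\log n}{n}},
\]
where $\tau:=0.24-\Vert\bfE_{\mathcal{D}}^{(n)}{\bfSigma}_S^{\dagger/2}\Vert$. If $\tau\le0$ the asserted right-hand side of the theorem is nonpositive while the left-hand side is a norm, so the claim is trivial; I therefore assume $\tau>0$.

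The aggregation is an $\ell_2$ triangle-inequality argument. Rearranging the display as $\tau R_k\le L_k+P_k$, where $P_k\ge0$ is the penalty, and using monotonicity of the Euclidean norm on nonnegative vectors followed by the triangle inequality, I get $\tau\,(\sum_k R_k^2)^{1/2}\le(\sum_k L_k^2)^{1/2}+(\sum_k P_k^2)^{1/2}$, that is, $\Vert\bfX^{(n)}\bfB-\bfTheta\Vert_{2,2}\ge\tau(\sum_k R_k^2)^{1/2}-(\sum_k P_k^2)^{1/2}$. Then $(\sum_k R_k^2)^{1/2}=\big(\Vert{\bfSigma}_S^{1/2}\bfB\Vert_{2,2}^2+\Vert\bfTheta\Vert_{2,2}^2\big)^{1/2}\ge\Vert{\bfSigma}_S^{1/2}\bfB\Vert_{2,2}\vee\Vert\bfTheta\Vert_{2,2}$, since a sum of two squares dominates each summand, which reproduces the leading term with the correct maximum; and the penalty vector is split by the triangle inequality into its $\bfB$-part and $\bfTheta$-part.

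The main obstacle, and the one place where the specific mixed norms of \Cref{def:RE} are forced, is converting the columnwise penalties into $\Vert\bfB\Vert_{1,1}$ and $\Vert\bfTheta\Vert_{2,1}$. For the design term one uses $\Vert\cdot\Vert_2\le\Vert\cdot\Vert_1$ on the nonnegative vector $(\Vert\bfB_{\bullet,k}\Vert_1)_k$, giving $(\sum_k\Vert\bfB_{\bullet,k}\Vert_1^2)^{1/2}\le\sum_k\Vert\bfB_{\bullet,k}\Vert_1=\Vert\bfB\Vert_{1,1}$. For the corruption term the matching is subtler: observing that the vector $(\Vert\bfTheta_{\bullet,k}\Vert_1)_k$ equals $\sum_i|\bfTheta_{i,\bullet}|$ (the entrywise absolute values of the rows, summed), the triangle inequality for $\Vert\cdot\Vert_2$ yields $(\sum_k\Vert\bfTheta_{\bullet,k}\Vert_1^2)^{1/2}=\big\Vert\sum_i|\bfTheta_{i,\bullet}|\big\Vert_2\le\sum_i\Vert\bfTheta_{i,\bullet}\Vert_2=\Vert\bfTheta\Vert_{2,1}$, which is exactly why the row-wise $\ell_2/\ell_1$ norm is the right penalty for $\bfTheta$. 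Substituting these two bounds into the aggregated inequality produces the stated estimate for all $[\bfB;\bfTheta]$ on $\Omega$, completing the proof.
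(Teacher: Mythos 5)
Your proposal is correct and follows essentially the same route as the paper: the paper likewise deduces \Cref{theorem:special} by applying the vector bound of \Cref{theorem:general:vectors} to each column of $[\bfB;\bfTheta]$ on the single high-probability event, aggregating via squaring/Minkowski, and bounding $\bigl(\sum_k\Vert\bfB_{\bullet,k}\Vert_1^2\bigr)^{1/2}\le\Vert\bfB\Vert_{1,1}$ and $\bigl(\sum_k\Vert\bfTheta_{\bullet,k}\Vert_1^2\bigr)^{1/2}\le\Vert\bfTheta\Vert_{2,1}$. Your derivation of the latter bound by a vector-valued triangle inequality is a slightly cleaner phrasing of the paper's Cauchy--Schwarz expansion, and your explicit treatment of the trivial case $\tau\le0$ is a small tidiness the paper omits.
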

\Cref{theorem:special} extends Theorem 1 in \cite{raskutti:wainwright:yu2010} in different directions. First, by taking ${\bfTheta}=\mathbf 0$, \Cref{theorem:special} establishes a lower bound over matrix parameters $\bfB\in\re^{p\times p}$, which entails an RE-type condition for multivariate models. To the best of our knowledge, there was no such result in the
literature. Second, \Cref{theorem:special} establishes a lower bound over the augmented matrix parameters $[\bfB;{\bfTheta}]\in\re^{(p+n)\times p}$ under the design contamination model given by \eqref{equation:model} and  \Cref{assump:random:contamination:model}. Note that this is new and appealing even in the 
case $\bfE = \mathbf 0$, which is relevant for checking RE-type properties for regression models where only the multivariate response is contaminated (but not the design) by outliers. Finally, it its full generality, \Cref{theorem:special} implies 
an augmented RE condition for the multidimensional linear models in which both the response and the design are contaminated. 

We next present important consequences that the augmented RE-type conditions of \Cref{def:RE:vectors} and \Cref{def:RE} holds for $\bfM=\bfX^{(n)}$ with high probability 
and with reasonable sample size as long ${\bfSigma}$ is sufficiently 
well-posed. For simplicity, we state the corollary only for the matrix case, the 
vector case being similar and even easier to handle.

\begin{cor}[Well-posedness of ${\bfSigma}$ implies the augmented RE-condition]
\label{cor:RE-condition}
Grant \Cref{assump:random:contamination:model} with a non-degenerate matrix 
$\bfSigma$. Let $n\ge208$  and 
\begin{align}
\gamma \ge 1.1\sqrt{\nicefrac{\log p}{\log n}}.
\label{equation:cor:penalization:factors}
\end{align}
Set $\kappa:=1\wedge\sqrt{\sigma_{\min}(\bfSigma+\bfSigma_{\bfE})}$. For some $c_0>0$, suppose the sample size satisfies\footnote{In case 
$
\gamma \ge 1.1\varrho(\bfSigma_S)\sqrt{\nicefrac{\log p}{\log n}},
$
condition \eqref{equation:cor:n} can be replaced by
$
36\left(\gamma\sqrt{s}+\sqrt{o}\right)\sqrt{\frac{\log n}{n}}\le \kappa c_0.
$
}
\begin{align}
&36\varrho({\bfSigma}_S)
\left(\gamma\sqrt{s}+\sqrt{o}\right)\sqrt{\frac{\log n}{n}}\le \kappa c_0,
\label{equation:cor:n}\\
&\mathsf{c}_n:=0.24-\|\bfE_{\mathcal{D}}^{(n)}{\bfSigma}_S^{\dagger/2}\|
-(1+c)c_0>0.\label{equation:cor:c}
\end{align}
Then, with probability at least 
$1-2\exp(-n/297)$, $\bfX^{(n)}$ satisfies $\RE_{s,o}^{\rm mat}(c,\gamma)$ 
with the constant $\mathsf{c}_n\kappa$.
\end{cor}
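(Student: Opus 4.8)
The plan is to derive \Cref{cor:RE-condition} directly from \Cref{theorem:special} by specializing its global lower bound to the augmented cone $\calC_{\mathcal{J},O}(c,\gamma)$ and absorbing the two penalty terms into the leading term. Fix an arbitrary support pair $(\mathcal{J},O)$ with $|\mathcal{J}|=s$, $|O|=o$, and a pair $[\bfB;\bfTheta]\in\calC_{\mathcal{J},O}(c,\gamma)$; all estimates below are carried out on the probability-$(1-2e^{-n/297})$ event provided by \Cref{theorem:special}, so that the conclusion will be uniform over $(\mathcal{J},O)$ and over the cone, which is exactly what $\RE^{\rm mat}_{s,o}(c,\gamma)$ of \Cref{def:RE} requires.

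First I would convert the signal term $\|\bfSigma_S^{1/2}\bfB\|_{2,2}\vee\|\bfTheta\|_{2,2}$ into the desired $\|\bfB\|_{2,2}\vee\|\bfTheta\|_{2,2}$. Since $\bfSigma$ is non-degenerate and $\bfSigma_{\bfE}\succeq0$, we have $\bfSigma_S=\bfSigma+\bfSigma_{\bfE}\succ0$, hence $\sigma_{\min}(\bfSigma_S)>0$. The identity $\|\bfSigma_S^{1/2}\bfB\|_{2,2}^2=\tr(\bfB^\top\bfSigma_S\bfB)\ge\sigma_{\min}(\bfSigma_S)\|\bfB\|_{2,2}^2$ together with $\kappa=1\wedge\sqrt{\sigma_{\min}(\bfSigma_S)}$ yields $\|\bfSigma_S^{1/2}\bfB\|_{2,2}\ge\kappa\|\bfB\|_{2,2}$, while $\|\bfTheta\|_{2,2}\ge\kappa\|\bfTheta\|_{2,2}$ holds trivially because $\kappa\le1$; taking maxima gives $\|\bfSigma_S^{1/2}\bfB\|_{2,2}\vee\|\bfTheta\|_{2,2}\ge\kappa(\|\bfB\|_{2,2}\vee\|\bfTheta\|_{2,2})$. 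This is the only place where the well-posedness of $\bfSigma$ enters.

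Next I would control the two penalty terms on the cone. Using the penalization constraint \eqref{equation:cor:penalization:factors}, i.e. $\sqrt{\log p/n}\le\gamma\sqrt{\log n/n}$, both penalties can be placed at the common scale $\sqrt{\log n/n}$ and merged into a single multiple of $\gamma\|\bfB\|_{1,1}+\|\bfTheta\|_{2,1}$ with coefficient $36\varrho(\bfSigma_S)$. On the cone, $\gamma\|\bfB_{\mathcal{J}^c}\|_{1,1}+\|\bfTheta_{O^c,\bullet}\|_{2,1}\le c(\gamma\|\bfB_{\mathcal{J}}\|_{1,1}+\|\bfTheta_{O,\bullet}\|_{2,1})$, whence $\gamma\|\bfB\|_{1,1}+\|\bfTheta\|_{2,1}\le(1+c)(\gamma\|\bfB_{\mathcal{J}}\|_{1,1}+\|\bfTheta_{O,\bullet}\|_{2,1})$. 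Cauchy--Schwarz over the $s$ nonzero entries of $\bfB_{\mathcal{J}}$ and the $o$ nonzero rows of $\bfTheta_{O,\bullet}$ then gives $\gamma\|\bfB_{\mathcal{J}}\|_{1,1}+\|\bfTheta_{O,\bullet}\|_{2,1}\le\gamma\sqrt{s}\|\bfB\|_{2,2}+\sqrt{o}\|\bfTheta\|_{2,2}\le(\gamma\sqrt{s}+\sqrt{o})(\|\bfB\|_{2,2}\vee\|\bfTheta\|_{2,2})$, so that by the sample-size condition \eqref{equation:cor:n} the total penalty is at most $(1+c)\kappa c_0(\|\bfB\|_{2,2}\vee\|\bfTheta\|_{2,2})$.

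Finally I would assemble the pieces: substituting the two bounds into \Cref{theorem:special} yields $\|\bfX^{(n)}\bfB-\bfTheta\|_{2,2}\ge\kappa(0.24-\|\bfE_{\mathcal{D}}^{(n)}\bfSigma_S^{\dagger/2}\|-(1+c)c_0)(\|\bfB\|_{2,2}\vee\|\bfTheta\|_{2,2})=\mathsf{c}_n\kappa(\|\bfB\|_{2,2}\vee\|\bfTheta\|_{2,2})$, strictly positive by \eqref{equation:cor:c}, which is precisely the $\RE^{\rm mat}_{s,o}(c,\gamma)$ inequality. The argument is essentially bookkeeping once \Cref{theorem:special} is in hand; the one delicate point, where I expect the only real friction, is making the two penalty scales commensurable — reconciling the $\sqrt{\log p}$ and $\sqrt{\log n}$ rates through $\gamma$ and aligning the numerical constants $33$ and $36\varrho(\bfSigma_S)$ in the merged penalty with the stated sample-size condition. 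This is exactly what forces the two variants of \eqref{equation:cor:n}: under $\gamma\ge1.1\sqrt{\log p/\log n}$ one merges at coefficient $36\varrho(\bfSigma_S)$ (using $33\le36\varrho(\bfSigma_S)$), whereas under the footnote's hypothesis $\gamma\ge1.1\varrho(\bfSigma_S)\sqrt{\log p/\log n}$ the factor $\varrho(\bfSigma_S)$ is absorbed into $\gamma$ and one merges at the bare coefficient $36$.
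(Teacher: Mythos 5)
Your proposal is correct and follows essentially the same route as the paper's own proof: invoke \Cref{theorem:special} on its high-probability event, use the cone inequality plus Cauchy--Schwarz to bound $\gamma\Vert\bfB\Vert_{1,1}+\Vert\bfTheta\Vert_{2,1}$ by $(1+c)(\gamma\sqrt{s}+\sqrt{o})(\Vert\bfB\Vert_{2,2}\vee\Vert\bfTheta\Vert_{2,2})$, merge the two penalties at the common scale $\sqrt{\nicefrac{\log n}{n}}$ via \eqref{equation:cor:penalization:factors}, and lower bound the leading term by $\kappa(\Vert\bfB\Vert_{2,2}\vee\Vert\bfTheta\Vert_{2,2})$ using $\bfSigma_S\succeq\bfSigma\succ0$. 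Your closing remark on reconciling the constants $33$ and $36\varrho(\bfSigma_S)$ matches the paper's own bookkeeping (which, like yours, implicitly uses $\varrho(\bfSigma_S)\gtrsim1$ at that step), including the role of the footnote's alternative hypothesis.
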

\begin{proof}
Let $\Omega_0$ be the event of probability at least $1-2e^{-n/297}$ in which
the claims of \Cref{theorem:special} hold true. The rest of the proof
contains only deterministic bounds assuming that $\Omega_0$ is realized. 
Let $\mathcal{J}$ and $O$ be as in \Cref{def:RE} with
$|\mathcal{J}|=s$ and $|O|=o$.  Let $[\bfB;{\bfTheta}]\in\re^{(p+n)\times p}$. 
On the one hand, the Cauchy-Schwarz inequality yields 
$\Vert\bfB_{\mathcal{J}}\Vert_{1,1}\le\sqrt{|\mathcal{J}|}\Vert\bfB\Vert_{2,2}$ and  
$\Vert{\bfTheta}_{O,\bullet}\Vert_{2,1}\le\sqrt{|O|}\Vert{\bfTheta}\Vert_{2,2}$. 
On the other hand, we obviously have
\begin{align}
\Vert\bfB\Vert_{1,1}&=\Vert\bfB_{\mathcal{J}^c}\Vert_{1,1}
+\Vert\bfB_{\mathcal{J}}\Vert_{1,1},\\
\Vert{\bfTheta}\Vert_{2,1}&=\Vert{\bfTheta}_{O^c,\bullet}\Vert_{2,1}
+\Vert{\bfTheta}_{O,\bullet}\Vert_{2,1}.
\end{align}
Assuming that $[\bfB;{\bfTheta}]\in{\calC}_{\mathcal{J},O}
(c,\gamma)$, the previous relations lead to
\begin{align}
\gamma\Vert\bfB\Vert_{1,1}&+\Vert{\bfTheta}\Vert_{2,1}\\
&\le (c+1)\big(\gamma\sqrt{|\mathcal{J}|}\,\Vert\bfB\Vert_{2,2}+
\sqrt{|O|}\,\Vert{\bfTheta}\Vert_{2,2}\big)\\
&\le (c+1)\big(\gamma\sqrt{s}+\sqrt{o}\big)
\big(\Vert\bfB\Vert_{2,2}\vee\Vert{\bfTheta}\Vert_{2,2}\big).
\end{align}
If we set $\lambda:=\sqrt{\nicefrac{\log n}{n}}$ for simplicity of notation, we have 
\begin{align}
&36\varrho({\bfSigma}_S)\Vert \bfB\Vert_{1,1}
\sqrt{\nicefrac{\log p}{n}}
+33\Vert{\bfTheta}\Vert_{2,1}\sqrt{\nicefrac{\log n}{n}}\\
&\le 36\varrho({\bfSigma}_S)\Vert \bfB\Vert_{1,1}
\sqrt{\nicefrac{\log p}{n}}
+36\Vert{\bfTheta}\Vert_{2,1}\lambda\\ 
&\le 36(\gamma\varrho({\bfSigma}_S)\Vert \bfB\Vert_{1,1}
+\Vert{\bfTheta}\Vert_{2,1})\lambda\\
&\le 36\lambda(c+1)\varrho({\bfSigma}_S)
\left(\gamma\sqrt{s}+\sqrt{o}\right)\left(\Vert\bfB\Vert_{2,2}\vee\Vert{\bfTheta}\Vert_{2,2}\right) \\
&\le (1+c)c_0\kappa\left(\Vert\bfB\Vert_{2,2}\vee\Vert{\bfTheta}\Vert_{2,2}\right),
\label{cor:eq1}
\end{align}
where in the last inequality we have used condition \eqref{equation:cor:n}.

Since $\bfSigma$ is nondegenerate, the same holds for $\bfSigma_S\succeq \bfSigma$. 
Hence, 
\begin{align}
\Vert\bfSigma_S^{1/2}\bfB\Vert_{2,2}\vee\Vert{\bfTheta}\Vert_{2,2}
&\ge (\sigma_{\min}(\bfSigma_S)^{1/2}\Vert\bfB\Vert_{2,2})\vee\Vert{\bfTheta}\Vert_{2,2}\\
&\ge \kappa \Vert\bfB\Vert_{2,2}\vee\Vert{\bfTheta}\Vert_{2,2}.\label{cor:eq2}
\end{align}
Combining the claim of \Cref{theorem:special} with \eqref{cor:eq1},\eqref{cor:eq2} and 
the definition of $\mathsf{c}_n$, see \eqref{equation:cor:c}, we get
\begin{align}
\|\bfX^{(n)}\bfB-\bfTheta\|_{2,2} \ge \mathsf{c}_n\kappa \Vert\bfB\Vert_{2,2}\vee\Vert{\bfTheta}\Vert_{2,2}.
\label{cor:eq3}
\end{align}
This completes the proof of the corollary.
\end{proof}
Note that \Cref{theorem:special} and \Cref{cor:RE-condition} lead to meaningful 
results only when the spectral norm of the matrix $\bfE_{\mathcal D}^{(n)}\bfSigma_S^{\dagger/2}$ is small (for instance, smaller than $0.2$). As noted in \citep{balmand:dalalyan2015}, 
this kind of condition is not too restrictive. Indeed, one can perform a first round
of data preprocessing consisting in removing the most striking outliers.  After such a
preprocessing, it is reasonable to assume that $\|\bfE_{\mathcal D}^{(n)}
\bfSigma_S^{\dagger/2}\|\le 0.2$. 

To complete this section, let us just mention that the condition of nondegeneracy of
$\bfSigma$ can be further relaxed. Indeed, it follows from the proof of 
\Cref{cor:RE-condition} that it suffices to assume that the matrix $\bfSigma_S$ fulfills 
the following RE-type condition: For every $[\bfB;\bfTheta]$ from $\calC_{\mathcal J,O}$ 
with $|\mathcal J| =s$ and $|O| = o$, it holds that
\begin{align}
\|\bfSigma_S^{1/2}\bfB\|_{2,2} + \|\bfTheta\|_{2,2} \ge \kappa 
(\|\bfB\|_{2,2}\vee \|\bfTheta\|_{2,2}).
\end{align}

\section{Consequences in robust estimation}\label{sec:cons}

The goal of this section is to compare \Cref{cor:RE-condition} to the 
similar results previously obtained in the examples described in 
\Cref{sec:examples}.

\subsection{Augmented RE for the Robust Lasso}\label{rem:robust:lasso}

Let us start by considering the Robust Lasso estimator proposed in 
\citep{nguyen:tran2013} to perform sparse recovery in presence of outliers, see 
\Cref{example:robust:lasso}. Throughout this discussion we assume that 
$\varrho({\bfSigma})=1$, which significantly simplifies the comparison. 
It is established in \citep{nguyen:tran2013} that the augmented RE condition 
of \Cref{def:RE:vectors} with the parameter
\begin{align}
\gamma^2 = \Theta\Big(\frac{\log p}{\log n}\Big),
\label{equation:lambda:gamma:robust:lasso}
\end{align}
holds with high probability for Gaussian matrices $\bfX^{(n)}$, provided that 
${\bfSigma}$ is nondegenerate, \textit{i.e.}, $\sigma_{\min}:
=\sigma_{\min}(\bfSigma)>0$, and that the parameters $(s,o,n)$ are constrained 
by\footnote{Although constraint \eqref{equation:s:O:constraint} does not
appear in the statement of Lemma 1 in \citep{nguyen:tran2013}, it is indeed required
in the proof of Lemma 1. In fact, Lemma 1 relies on Lemma 2, which requires 
\eqref{equation:s:O:constraint}.}
\begin{align}
&o=O\Big(\frac{s\log p}{\sigma_{\min}\log n}\Big),
\label{equation:s:O:constraint}\\
&\sigma_{\max}(\bfSigma) = \Theta(1),\label{equation:max:constraint}\\
&{\frac{s\log p}{\sigma_{\min}^2 n}}\bigvee{\frac{o\log n}{\sigma_{\min}n}}\le c_1,
\label{equation:n:robust:lasso}
\end{align}
for sufficiently small $c_1$. The obtained RE constant is 
$\kappa'={\Omega}(1)\min\{\sqrt{\sigma_{\min}({\bfSigma})},1\}$. See Lemmas 1-2 in \citep{nguyen:tran2013}. Additionally, if $\bb^*$ is $s$-sparse and $\btheta^*$ is 
$o$-sparse and the 
above RE condition is fulfilled, risk bounds---minimax rate-optimal when 
$o=0$---in the $\ell_2$-norm are guaranteed. 

If we compare these results of \citep{nguyen:tran2013} to that of 
\Cref{cor:RE-condition}, we can make four observations. We first remark 
that our result does not require
any assumption of type \eqref{equation:s:O:constraint} for the sparsity 
levels $(s,o)$ of $(\bb^*,\btheta^*)$. Indeed, for our result to hold 
there is no need to put any constraint relating the sparsity level of 
the unknown parameter to the number of outliers. 

The second observation is that our condition \eqref{equation:cor:n} 
is of the same flavor as \eqref{equation:n:robust:lasso}, but it holds without
the additional assumption \eqref{equation:max:constraint}. The latter is not
suitable in a high-dimensional setting, since in many situations the largest
eigenvalue of a $p\times p$ matrix increases with the dimension $p$.

The third observation is that our result provides improved numerical constants. 
For instance, when $\bfSigma$ is the identity matrix, the results in 
\citep{nguyen:tran2013} yield\footnote{We use the informal notation $\nearrow$ 
to express the fact that some quantity can be in some regime very close to some value.
} $\kappa \nearrow 0.0625$ with 
probability $\ge1-c_1\exp(-c_2n)$ for $n\ge6400$. Our result leads to 
$\kappa\nearrow 0.24$ with probability $\ge1-2\exp(-n/297)$ for $n\ge208$.

Finally, our results are valid for corrupted designs ($\bfE\neq0$), a case that does 
not enter into the scope of \citep{nguyen:tran2013}. In addition, we can handle not only
deterministic but also random outliers, with improved bounds for Gaussian contamination of the design matrix.\footnote{Our bounds do not depend on $\sigma_{\max}(\mathbf{\Sigma}_{\mathbf{E}})$ which is potentially $\mathcal{O}(p)$ when the coordinates of outliers are highly correlated.}

\subsection{Augmented RE for \Cref{example:robust:precision}}
\label{rem:robust:precision:matrix}

Next, we present an application of \Cref{cor:RE-condition} to the problem
of matrix estimation addressed in \citep{balmand:dalalyan2015} and 
briefly discussed in \Cref{example:robust:precision}. More precisely,
\citep{balmand:dalalyan2015} proposes a robust estimator of the precision matrix 
$\mathbf{\Omega}^*:={\bfSigma}^{-1}$ that can be computed by convex programming. 
The estimator is analyzed in the high-dimensional setting, where 
$\mathbf{\Omega}^*\in\re^{p\times p}$ is $\mathcal{J}$-sparse with 
the support $\mathcal{J}:=\{J_j:j\in[p],J_j\subset[p]\}$ and 
$p\le|\mathcal{J}|\ll n\wedge p^2$. Theorem 3 in \citep{balmand:dalalyan2015}
provides a risk bound for the aforementioned estimator, provided that the 
difference between the estimated and the true matrix satisfies the augmented
RE condition with $\gamma = 1$ and $c=2$. In \citep{balmand:dalalyan2015}, 
however, the augmented RE property is assumed to hold a priori.

One contribution of \Cref{cor:RE-condition} is to complement the findings of 
\citep{balmand:dalalyan2015} by showing that, under the same conditions on the 
sample size as in \citep{balmand:dalalyan2015}, the augmented 
RE condition is indeed satisfied with high-probability.

\section{General versions of the main theorems}\label{sec:full_thms}

\Cref{theorem:special:vectors} and \Cref{theorem:special} are user friendly versions
of more general theorems that we will state in the present section. The main advantage
of these general versions is that they can lead to improved numerical constants in
some concrete situations. For instance, if we know that the sample size is larger than 
$10^4$, we can get some improvement in the RE constant or in the probability of 
validity of the RE condition.  

To state the results of this section, we need some additional notations. 
In fact, all the numerical constants appearing in \Cref{theorem:special:vectors} and \Cref{theorem:special} are obtained by instantiating 5 parameters $\epsilon$, $\alpha$, $\beta$, $\sigma$ and $\tau$. These parameters should be positive with the only constraint
that $\epsilon<3/4$. With the help of these parameters, we introduce the following
quantities:
\begin{align}
\rho:=(1+\tau)(1+1/\sigma),
\quad
\mu_\epsilon:=1-\frac{(3/4-\epsilon)}{\sqrt{2}}
\label{equation:def:rho:mu:epsilon}
\end{align}
and, for any matrix $\bfA \in\re^{n\times p}$, 
\begin{align}
\mathsf{C}_{n}(\bfA )&:=\rho\left[\frac{(3/4-\epsilon)}{\sqrt{2}}
(1-e^{-\frac{n\epsilon^2}{2}})-(1-\rho^{-1})\right]\\
&-\left(\frac{1}{2\alpha}+\frac{1}{2\beta}+
\frac{\sigma_{\max}(\bfA )}{\sqrt{n}}\right).
\label{equation:def:CA}
\end{align}

\begin{thm}[The vector case]\label{theorem:general:vectors}
Grant \Cref{assump:random:contamination:model}. Let $\epsilon\in(0,3/4)$ and 
$\alpha$, $\beta$, $\sigma$ and $\tau$ be positive numbers. Then, for all
$
n\ge\left(\frac{2\sigma^2\log 2}{(1+\tau)^2\mu_\epsilon^2}\right)\vee10,
$
with probability at least 
$
1-2\exp\left[-\frac{(1+\tau)^2\mu_{\epsilon}^2}{2\sigma^2}n\right],
$
we have
\begin{align}
\big\Vert\bfX^{(n)}\bb-\btheta\big\Vert_2&\ge
\mathsf{C}_{n}(\bfE_{\mathcal{D}}{\bfSigma}_S^{\dagger/2})
\left\Vert\left[{\bfSigma}^{1/2}_S\bb;\btheta\right]\right\Vert_2\\
&-\rho\left(2+\alpha\sqrt{2}\right)\varrho({\bfSigma}_S)\Vert \bb\Vert_1\sqrt{\frac{\log p}{n}}\\ 
&- \rho\beta\Vert\btheta\Vert_1\sqrt{\frac{2\log n}{n}}.
\end{align}
\end{thm}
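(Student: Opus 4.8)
The plan is to reduce the augmented lower bound to a pair of decoupled, low-dimensional Gaussian quantities via Gordon's comparison inequality, and then to promote a fixed-radius estimate to the stated homogeneous inequality by a peeling argument over $\ell_1$-radii. The first step is to peel off the deterministic corruption. Writing $\bfX=\bfX_{\mathcal R}+\bfE_{\mathcal D}$ with $\bfX_{\mathcal R}=\bfY+\bfE_{\mathcal R}$ having i.i.d.\ $\calN_p(\mathbf 0,\bfSigma_S)$ rows, the triangle inequality gives $\Vert\bfX^{(n)}\bb-\btheta\Vert_2\ge\Vert\bfX_{\mathcal R}^{(n)}\bb-\btheta\Vert_2-\Vert\bfE_{\mathcal D}^{(n)}\bb\Vert_2$. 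Under \Cref{assump:random:contamination:model} the factorization $\bfE_{\mathcal D}^{(n)}\bb=(\bfE_{\mathcal D}^{(n)}\bfSigma_S^{\dagger/2})(\bfSigma_S^{1/2}\bb)$ is valid (when $\bfE_{\mathcal D}\ne0$ the matrix $\bfSigma_S\succeq\bfSigma$ is nonsingular, so $\bfSigma_S^{\dagger/2}\bfSigma_S^{1/2}=\bfI$; otherwise the term vanishes), whence $\Vert\bfE_{\mathcal D}^{(n)}\bb\Vert_2\le\Vert\bfE_{\mathcal D}^{(n)}\bfSigma_S^{\dagger/2}\Vert\,\Vert\bfSigma_S^{1/2}\bb\Vert_2$. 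This is precisely the term $\sigma_{\max}(\bfE_{\mathcal D}\bfSigma_S^{\dagger/2})/\sqrt n$ subtracted inside $\mathsf{C}_n$, so it suffices to treat the Gaussian part with $\bfX_{\mathcal R}^{(n)}=\tfrac1{\sqrt n}\bfZ\bfSigma_S^{1/2}$ and $\bfZ$ standard Gaussian.

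The second, crucial step is decoupling. Writing the norm as a supremum over the dual sphere, $\Vert\bfX_{\mathcal R}^{(n)}\bb-\btheta\Vert_2=\max_{\Vert\bw\Vert_2=1}\big(\tfrac1{\sqrt n}\inner{\bw}{\bfZ\bfSigma_S^{1/2}\bb}-\inner{\bw}{\btheta}\big)$, I would apply Gordon's Gaussian min--max comparison, in the form that accommodates the deterministic additive term $-\inner{\bw}{\btheta}$, to the bilinear form $\inner{\bw}{\bfZ\bfSigma_S^{1/2}\bb}$. This lower bounds the expected infimum over any fixed constraint set by the expected infimum of the decoupled quantity
\[ \Big\Vert \tfrac{\Vert\bfSigma_S^{1/2}\bb\Vert_2}{\sqrt n}\,\boldsymbol g - \btheta\Big\Vert_2 + \tfrac{1}{\sqrt n}\inner{\boldsymbol h}{\bfSigma_S^{1/2}\bb}, \]
where $\boldsymbol g\in\re^n$ and $\boldsymbol h\in\re^p$ are independent standard Gaussians. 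This replaces the $n\times p$ random bilinear form by two low-dimensional Gaussians whose scales are very different: the $\boldsymbol g$-process lives on the scale $\sqrt n$ and balances $\bb$ against $\btheta$, while the $\boldsymbol h$-process lives on the scale $\sqrt{\log p}$ and controls $\bb$ alone.

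The third step bounds the decoupled quantity on a high-probability event, which amounts to controlling the two random inner products $\inner{\boldsymbol g}{\btheta}$ and $\inner{\boldsymbol h}{\bfSigma_S^{1/2}\bb}$. The $\boldsymbol h$-term is bounded below by $-\tfrac1{\sqrt n}\Vert\bfSigma_S^{1/2}\boldsymbol h\Vert_\infty\Vert\bb\Vert_1$, and $\Vert\bfSigma_S^{1/2}\boldsymbol h\Vert_\infty\le\varrho(\bfSigma_S)\sqrt{2\log p}\,(1+\cdots)$ by a union bound over $p$ coordinates, producing the $\Vert\bb\Vert_1\sqrt{\log p/n}$ correction. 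For the $\boldsymbol g$-term, set $a:=\Vert\bfSigma_S^{1/2}\bb\Vert_2$ and $D:=\Vert[\bfSigma_S^{1/2}\bb;\btheta]\Vert_2=\sqrt{a^2+\Vert\btheta\Vert_2^2}$; expanding the square and using $\inner{\boldsymbol g}{\btheta}\le\Vert\boldsymbol g\Vert_\infty\Vert\btheta\Vert_1$ with $a\le D$ gives
\[ \Big\Vert \tfrac{a}{\sqrt n}\boldsymbol g - \btheta\Big\Vert_2^2 \ge \tfrac{\Vert\boldsymbol g\Vert_2^2}{n}\,a^2 + \Vert\btheta\Vert_2^2 - 2D\,\tfrac{\Vert\boldsymbol g\Vert_\infty}{\sqrt n}\Vert\btheta\Vert_1 . \]
On the event $\{\Vert\boldsymbol g\Vert_2/\sqrt n\ge 3/4-\epsilon\}$ (of probability at least $1-e^{-n\epsilon^2/2}$ by Gaussian concentration around a lower bound for $\esp\Vert\boldsymbol g\Vert_2/\sqrt n$ valid for $n\ge10$, which explains the factor $1-e^{-n\epsilon^2/2}$) together with $\Vert\boldsymbol g\Vert_\infty\le\sqrt{2\log n}\,(1+\cdots)$, a case split according to whether the penalty $\tfrac{\Vert\boldsymbol g\Vert_\infty}{\sqrt n}\Vert\btheta\Vert_1$ exceeds a fixed fraction of $D$ converts this into the linear bound $\tfrac{3/4-\epsilon}{\sqrt2}D-c\,\Vert\btheta\Vert_1\sqrt{\log n/n}$; the $\sqrt2$ and the linear (rather than squared) dependence on $\Vert\btheta\Vert_1$ both originate in this case split.

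Finally, passing from expectation to high probability uses Gaussian Lipschitz concentration of $\bfZ\mapsto\inf\Vert\bfX_{\mathcal R}^{(n)}\bb-\btheta\Vert_2$ (Lipschitz constant $\le1/\sqrt n$ on the sphere), and a peeling argument over dyadic $\ell_1$-radii turns the fixed-radius, on-sphere estimate into the stated inequality for all $[\bb;\btheta]\in\re^{p+n}$: the peeling ratio and slack contribute the factor $\rho=(1+\tau)(1+1/\sigma)$ and the exponent $(1+\tau)^2\mu_\epsilon^2/(2\sigma^2)$, while the Young-type parameters $\alpha,\beta$ arise when converting multiplicative into additive bounds, yielding the coefficients $\rho(2+\alpha\sqrt2)\varrho(\bfSigma_S)$ and $\rho\beta$ and the subtracted terms $1/(2\alpha)+1/(2\beta)$ in $\mathsf{C}_n$. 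I expect the main obstacle to be the decoupling-plus-balancing step: making Gordon's comparison absorb the deterministic $\btheta$-shift in the augmented space, and then extracting a \emph{clean linear} $\Vert\btheta\Vert_1$ penalty from the $\boldsymbol g$-process, all while keeping the two distinct scales $\log p$ (for $\bb$) and $\log n$ (for $\btheta$) correctly separated through the simultaneous control of the two random bilinear forms and the peeling.
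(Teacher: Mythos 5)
Your outer architecture (triangle inequality to strip $\bfE_{\mathcal D}$ through $\bfSigma_S^{\dagger/2}$, a fixed-radius bound on the augmented ellipse $V(r_1,r_2)$, Gaussian Lipschitz concentration of the restricted infimum, then peeling plus homogeneity) coincides with the paper's, but your central step takes a genuinely different route. The paper never applies Gordon's min--max comparison to the full augmented form $\sup_{\bw}\{\bw^\top\bfX_{\mathcal R}^{(n)}\bb-\bw^\top\btheta\}$; instead it expands $\Vert\bfX_{\mathcal R}^{(n)}\bb-\btheta\Vert_2^2$ and \emph{splits}: the quadratic part $\Vert\bfX_{\mathcal R}^{(n)}\bb\Vert_2^2+\Vert\btheta\Vert_2^2$ is reduced, via a case analysis on whether $\Vert\btheta\Vert_2^2\le 1/2$, to $\tfrac12\min\{\inf_{\bb'\in V_1(\sqrt2 r_1)}\Vert\bfX_{\mathcal R}^{(n)}\bb'\Vert_2^2,1\}$ and handled by the Raskutti--Wainwright--Yu-type bound (Lemma \ref{lemma:aux1}); the cross term $\btheta^\top\bfX_{\mathcal R}^{(n)}\bb$ is bounded in expectation over the product set $\overline V_1(r_1)\times\overline V_2(r_2)$ by a Sudakov--Fernique comparison with the decoupled process $({\bfSigma}_S^{1/2}\bb)^\top\bfX^\top\overline\btheta+\overline\bb^\top\overline{\bfX}^\top\btheta$ (Lemma \ref{lemma:aux2}), and the resulting $\sqrt{2\mathsf I_2}$ is linearized via Jensen and $2\sqrt x\le\lambda x+\lambda^{-1}$ --- this is precisely where $\alpha,\beta$ and the subtracted $\tfrac1{2\alpha}+\tfrac1{2\beta}$ inside $\mathsf C_n$ originate. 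Your single Gaussian min--max application, with $-\langle\bw,\btheta\rangle$ as the deterministic term and $(\bb,\btheta)$ as the joint minimization variable, is a legitimate and arguably more direct alternative: it yields the linear $\Vert\btheta\Vert_1\sqrt{\log n/n}$ penalty straight from $\langle\boldsymbol g,\btheta\rangle\le\Vert\boldsymbol g\Vert_\infty\Vert\btheta\Vert_1$ rather than through the square root of a bilinear supremum, and your $\sqrt2$ comes from a different case split (penalty versus a fraction of $D$) than the paper's split on $\Vert\btheta\Vert_2^2$. The trade-offs to be aware of: (i) the specific constants in the statement ($2+\alpha\sqrt2$, $\beta\sqrt2$, the placement of $(1-e^{-n\epsilon^2/2})$ and of $\tfrac1{2\alpha}+\tfrac1{2\beta}$) are artifacts of the paper's Jensen-plus-Young linearization, so your route proves a theorem of the same form but will not reproduce these constants verbatim without retrofitting; and (ii) you should decide whether you use Gordon in expectation or in probability form --- if the former, the auxiliary bounds on $\Vert\boldsymbol g\Vert_2$, $\Vert\boldsymbol g\Vert_\infty$ and $\Vert\bfSigma_S^{1/2}\boldsymbol h\Vert_\infty$ must be assembled inside a single expectation by linearity (as the paper does with its indicator decomposition over $A_\epsilon(r_1)$), and if the latter, their failure probabilities must be accounted for alongside the concentration step rather than invoked twice.
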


We note that, \Cref{theorem:special:vectors} is obtained by choosing 
in \Cref{theorem:general:vectors} $\epsilon=0.19$, $\tau=0.02$, 
$\sigma=7.5$ and $\alpha=\beta=20$. Of course, one can choose other 
values for these parameters and obtain slightly different versions
of \Cref{theorem:special:vectors}. In particular, if we know in advance
that the sample size is large, we can choose a much smaller $\epsilon$
and larger $\sigma$. This will lead to a value of $\mathsf{C}_n$ that
is closer to $3/(4\sqrt{2})\approx 0.53$.

Let us state now the general version of the main theorem in the matrix case.

\begin{thm}[The matrix case]\label{theorem:general}
Suppose Assumption \ref{assump:random:contamination:model} holds. Set $\epsilon\in(0,3/4)$ and positive numbers $\alpha$, $\beta$, $\sigma$ and $\tau$. Then, for all
$
n\ge\left(\frac{2\sigma^2\log 2}{(1+\tau)^2\mu_\epsilon^2}\right)\vee10,
$
with probability at least 
$
1-2\exp\left[-\frac{(1+\tau)^2\mu_{\epsilon}^2}{2\sigma^2}n\right],
$
we have
\begin{align}
\left\Vert \bfX^{(n)}\bfB-\bfTheta\right\Vert_{2,2}&\ge 
{\mathsf{C}_{n}(\bfE_{\mathcal{D}}{\bfSigma}_S^{\dagger/2})}
\left(\Vert{\bfSigma}^{1/2}_S\bfB\Vert_{2,2}\vee\Vert{\bfTheta}\Vert_{2,2}\right)\\
&-\rho\left(2+\alpha\sqrt{2}\right)\varrho({\bfSigma}_S)\Vert \bfB\Vert_{1,1}\sqrt{\frac{\log p}{n}}\\
&-\rho\beta\Vert{\bfTheta}\Vert_{2,1}\sqrt{\frac{2\log n}{n}}.
\end{align}
\end{thm}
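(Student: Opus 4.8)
Because every term in the asserted inequality is positively homogeneous of degree one in $[\bfB;\bfTheta]$, and because the $\RE$ condition follows from the pointwise bound by restricting to the cone exactly as in \Cref{cor:RE-condition}, the plan is to prove the displayed inequality for all $[\bfB;\bfTheta]$ at once, following the Gaussian-comparison strategy of \cite{raskutti:wainwright:yu2010} lifted to the augmented Frobenius setting and paralleling the vector statement \Cref{theorem:general:vectors}. First I would strip off the deterministic corruption: writing $\bfX=\bfX_{\mathcal R}+\bfE_{\mathcal D}$ and using $\|a+b\|_{2,2}\ge\|a\|_{2,2}-\|b\|_{2,2}$ gives $\|\bfX^{(n)}\bfB-\bfTheta\|_{2,2}\ge\|\bfX_{\mathcal R}^{(n)}\bfB-\bfTheta\|_{2,2}-\|\bfE_{\mathcal D}^{(n)}\bfB\|_{2,2}$, and since $\bfSigma_S$ is nonsingular whenever $\bfE_{\mathcal D}\ne\mathbf 0$ we may insert $\bfSigma_S^{\dagger/2}\bfSigma_S^{1/2}=\bfI$ and bound $\|\bfE_{\mathcal D}^{(n)}\bfB\|_{2,2}\le\|\bfE_{\mathcal D}^{(n)}\bfSigma_S^{\dagger/2}\|\,\|\bfSigma_S^{1/2}\bfB\|_{2,2}$. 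This produces precisely the $-\sigma_{\max}(\bfE_{\mathcal D}\bfSigma_S^{\dagger/2})/\sqrt n$ contribution inside $\mathsf C_n$ and reduces matters to the centered design $\bfX_{\mathcal R}$, whose rows are i.i.d.\ $\calN_p(\mathbf 0,\bfSigma_S)$.

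Next I would whiten, writing $\bfX_{\mathcal R}=\mathbf W\bfSigma_S^{1/2}$ with $\mathbf W$ an $n\times p$ matrix of i.i.d.\ standard normals and setting $\tilde\bfB:=\bfSigma_S^{1/2}\bfB$, so that the problem becomes a lower bound for $\|\tfrac1{\sqrt n}\mathbf W\tilde\bfB-\bfTheta\|_{2,2}$. Dualizing the Frobenius norm exposes the two random bilinear forms of the abstract, $\|\tfrac1{\sqrt n}\mathbf W\tilde\bfB-\bfTheta\|_{2,2}=\sup_{\|\bfU\|_{2,2}\le1}\big[\tfrac1{\sqrt n}\langle\mathbf W,\bfU\tilde\bfB^{\top}\rangle-\langle\bfU,\bfTheta\rangle\big]$. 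The design form $\tfrac1{\sqrt n}\langle\mathbf W,\bfU\tilde\bfB^{\top}\rangle$ I would control from below by a Gordon-type Gaussian comparison together with Gaussian Lipschitz concentration; its mean contributes the leading coefficient (tending to $3/(4\sqrt2)$, with the factor $(1-e^{-n\epsilon^2/2})$ and the level $3/4-\epsilon$ coming from a chi-square tail at truncation level $\epsilon$), while its fluctuation across $\bfU$ and $\tilde\bfB$ is measured by a Gaussian width that, via H\"older's inequality and a maximal inequality over the $p$ columns, is of order $\varrho(\bfSigma_S)\|\bfB\|_{1,1}\sqrt{\log p/n}$; the split parameter $\alpha$ appears here, yielding the second line. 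The corruption form $\langle\bfU,\bfTheta\rangle=\sum_i\langle\bfU_{i,\bullet},\bfTheta_{i,\bullet}\rangle$ has a genuinely different behaviour: since $\bfTheta$ enters through the mixed norm $\|\bfTheta\|_{2,1}$, I would bound it by a sub-Gaussian maximal inequality over the $n$ rows, producing the $\|\bfTheta\|_{2,1}\sqrt{2\log n/n}$ slack with constant $\beta$. The global factor $\rho=(1+\tau)(1+1/\sigma)$ and the exponent through $\mu_\epsilon$ are the bookkeeping of the concentration step.

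The heart of the argument, and the step I expect to be the main obstacle, is to run these two one-sided bounds simultaneously on a single high-probability event while controlling the interaction term $\langle\bfX_{\mathcal R}^{(n)}\bfB,\bfTheta\rangle$ that couples the design and the corruption; this is exactly where singular-value bounds on $\bfX$ or mutual-incoherence estimates would ordinarily be invoked, and avoiding them is the crux. The coupling is also what forces the bilinear form $\langle\mathbf W,\bfU\tilde\bfB^{\top}\rangle$ to carry the product $\bfU\tilde\bfB^{\top}$ of two optimization variables, so the Gaussian comparison must be applied to a product index set rather than a sphere. Once the uniform lower bound on the design form and the uniform upper bound on the corruption form are in force, they jointly prevent $\bfX_{\mathcal R}^{(n)}\bfB$ from cancelling $\bfTheta$, which is precisely what lets the leading term appear as the maximum $\|\bfSigma_S^{1/2}\bfB\|_{2,2}\vee\|\bfTheta\|_{2,2}$. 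I would finish by invoking homogeneity to reduce the uniform-in-$[\bfB;\bfTheta]$ statement to bounding the two suprema over the normalized set $\{\|\tilde\bfB\|_{2,2}\vee\|\bfTheta\|_{2,2}=1\}$, then collect constants and fix $\epsilon=0.19,\tau=0.02,\sigma=7.5,\alpha=\beta=20$ to recover \Cref{theorem:special}. I note finally that this row-wise ($\ell_2/\ell_1$) treatment of $\bfTheta$ is the one genuinely new ingredient relative to the vector case and is the reason the corruption slack is $\|\bfTheta\|_{2,1}$ rather than $\|\bfTheta\|_{1,1}$, so a column-by-column reduction of the matrix statement to \Cref{theorem:general:vectors} is not available.
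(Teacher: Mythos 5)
Your final sentence asserts that a column-by-column reduction of the matrix statement to \Cref{theorem:general:vectors} is \emph{not} available because the corruption slack involves $\Vert\bfTheta\Vert_{2,1}$ rather than $\Vert\bfTheta\Vert_{1,1}$. This is exactly backwards: the paper's proof of \Cref{theorem:general} is precisely that reduction. On the single high-probability event of the vector theorem one applies the vector inequality to every column $[\bfB_{\bullet,j};\bfTheta_{\bullet,j}]$, weakens the leading term to $\mathsf{C}_{n}\big(\Vert\bfSigma_S^{1/2}\bfB_{\bullet,j}\Vert_2\vee\Vert\bfTheta_{\bullet,j}\Vert_2\big)$, moves the slack terms to the left, squares, sums over $j\in[p]$, and applies Minkowski's inequality. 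The ingredient you are missing is the elementary estimate
\begin{align}
\sum_{j=1}^p\Vert\bfTheta_{\bullet,j}\Vert_1^2
=\sum_{i=1}^n\sum_{k=1}^n\sum_{j=1}^p|\bfTheta_{ij}|\,|\bfTheta_{kj}|
\le\sum_{i=1}^n\sum_{k=1}^n\Vert\bfTheta_{i,\bullet}\Vert_2\Vert\bfTheta_{k,\bullet}\Vert_2
=\Vert\bfTheta\Vert_{2,1}^2,
\end{align}
obtained by Cauchy--Schwarz across the $p$ coordinates for each pair of rows $(i,k)$, together with the trivial bound $\sum_{j}\Vert\bfB_{\bullet,j}\Vert_1^2\le\Vert\bfB\Vert_{1,1}^2$. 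These convert the column-wise $\ell_1$ slacks into exactly the mixed norms $\Vert\bfB\Vert_{1,1}$ and $\Vert\bfTheta\Vert_{2,1}$ of the statement, so the matrix case costs nothing beyond the vector case; the row-wise $\ell_2/\ell_1$ norm is the \emph{output} of this aggregation, not an obstruction to it.

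The replacement route you sketch --- dualizing the Frobenius norm and running a Gordon-type comparison on the product index set carrying $\bfU\tilde\bfB^{\top}$ --- is not carried to a proof. You correctly identify the simultaneous control of the coupled bilinear forms as ``the main obstacle,'' but then leave it unresolved: lower bounding $\sup_{\Vert\bfU\Vert_{2,2}\le1}\big[n^{-1/2}\langle\mathbf{W},\bfU\tilde\bfB^{\top}\rangle-\langle\bfU,\bfTheta\rangle\big]$ uniformly over $[\bfB;\bfTheta]$ is a genuine min--max problem whose comparison functional over the set $\{\bfU\tilde\bfB^{\top}\}$ is not exhibited, and nothing in the sketch shows how the $\Vert\bfTheta\Vert_{2,1}\sqrt{\log n/n}$ slack would emerge from a row-wise maximal inequality at the stated rate. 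The paper avoids all of this by doing the hard probabilistic work only in $\re^{p+n}$ (the splitting into $\mathsf{I}_1$ and $\mathsf{I}_2$, Gordon, Sudakov--Fernique, concentration, peeling) and deducing the matrix case deterministically. As written, your proposal both rejects the correct argument and leaves its substitute unproved.
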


The constant $3/4$ in \Cref{theorem:general:vectors} and \Cref{theorem:general} is taken for simplicity. Its derivation is based on the fact that for a standard Gaussian vector 
$\mathbf{g}\in\re^{n}$, $\esp[\Vert\mathbf{g}\Vert_2]=\sqrt{n}+o(\sqrt{n})$. It can be replaced by any constant strictly less than 1 for a sufficiently large $n$.

\section{Outline of the proofs}\label{sec:high_level_proof}

This section provides a high level overview of the proof of \Cref{theorem:general:vectors} and \Cref{theorem:general}. The detailed proofs can be found in the supplementary
material. 

\Cref{theorem:general:vectors} requires establishing a lower bound, for any given $[\bb;\btheta]\in\re^{p+n}$, on
$
\left\Vert(\mathbf{Y}^{(n)}+\bfE^{(n)})\bb-\btheta\right\Vert_2^2.
$
Clearly, it is not enough to establish a lower bound for $\Vert\mathbf{Y}^{(n)}\bb\Vert^2$ as one also needs to simultaneously control terms of the form
\begin{eqnarray}
\Vert\mathbf{E}^{(n)}\bb\Vert^2+\Vert\btheta\Vert_2^2, &\quad & (\mathbf{Y}^{(n)}\bb)^\top\bfE^{(n)}\bb,\label{equation:cross:terms:fast}\\
\btheta^\top\mathbf{Y}^{(n)}\bb, &&\btheta^\top\mathbf{E}^{(n)}\bb.
\label{equation:cross:terms:slow}
\end{eqnarray}
In particular, the bound we seek will depend on the interactions between the column-spaces of the random matrices $\mathbf{Y}^{(n)}$ and $\bfE^{(n)}$, of $\mathbf{I}_n$ and $\mathbf{Y}^{(n)}$ and of $\mathbf{I}_n$ and $\mathbf{E}^{(n)}$. In this regard, we note that the decomposition $\bfE^{(n)}=\bfE_{\mathcal{D}}^{(n)}+\bfE_{\mathcal{R}}^{(n)}$ adds additional complexity (see Assumption \ref{assump:random:contamination:model}).

In the set-up where $\bfE\equiv0$, the authors of \cite{nguyen:tran2013} make use of singular value bounds and mutual incoherence arguments between column-spaces of $\mathbf{Y}^{(n)}$ and  $\mathbf{I}_n$. In order to obtain the sharper results mentioned in Remark \ref{rem:robust:lasso} and handle the presence of $\mathbf{E}$ in an optimized way, we avoid such type of approach. Instead, we adopt an approach inspired by 
\cite{raskutti:wainwright:yu2010}. We recall that the bounds of 
\cite{raskutti:wainwright:yu2010} are valid only for the set-up with 
no contamination ($\bfE\equiv0,\btheta\equiv0$). In their case, terms \eqref{equation:cross:terms:fast}-\eqref{equation:cross:terms:slow} are zero so that it is sufficient to establish a single uniform lower bound in high probability for the random quadratic form $\bb\mapsto\Vert\mathbf{Y}^{(n)}\bb\Vert^2$. 

The first step of the proof is to obtain a bound in expectation (see Subsection \ref{ss:expectation}).  
\subsection{Bound in expectation}
Instead of considering the whole augmented space $\re^{p+n}$, the first major step in the proof is to obtain, for every $r_1,r_2>0$, a lower bound for
\begin{eqnarray}
\esp\left[\inf_{[\bb;\btheta]\in V(r_1,r_2)}\Vert\mathbf{X}^{(n)}\bb-\btheta\Vert_2\right],
\label{equation:inf:V:r1:r2}
\end{eqnarray}
where
\begin{eqnarray*}
V(r_1,r_2):=\left\{[\bb;\btheta]\in\re^{p+n}:
\begin{array}{l}
\Vert\mathbf{\Sigma}_S^{1/2}\bb\Vert_2^2+\Vert\btheta\Vert_2^2=1,\\
\Vert \bb\Vert_1\le r_1,\Vert\btheta\Vert_1\le r_2
\end{array}
\right\}.
\end{eqnarray*}
The control of \eqref{equation:inf:V:r1:r2} is done via empirical process techniques. 
Let us first make some comments regarding the definition of $V(r_1,r_2)$ and then point challenges found in our approach.
\begin{itemize}\parsep=0pt
\item The normalization 
\begin{eqnarray}
\left\Vert\left[\begin{array}{cc}
\mathbf{\Sigma}_S^{1/2} & \mathbf{0}_{p\times n}\\
\mathbf{0}_{n\times p} & \mathbf{I}_n
\end{array}\right]
\left[\begin{array}{c}
\bb\\
\btheta
\end{array}
\right]
\right\Vert_2=1
\label{equation:normalization:aug}
\end{eqnarray}
used in $V(r_1,r_2)$ defines an ellipse in the \emph{augmented space} $\re^{p+n}$. As it will be clear in the proof, this is essential in order to extend the obtained lower bound over $V(r_1,r_2)$ to the whole augmented space using the homogeneity property of the norm. For instance, it seems unlikely that our technique would work if we considered naively the Cartesian product of the ellipses $\{\bb\in\re^p:\Vert\mathbf{\Sigma}_S^{1/2}\bb\Vert_2^2=1/{2}\}$ and $\{\btheta\in\re^n:\Vert\btheta\Vert_2^2=1/{2}\}$. On the other hand, it is harder to establish uniform lower bounds with the constraint $\Vert\mathbf{\Sigma}_S^{1/2}\bb\Vert_2^2+\Vert\btheta\Vert_2^2=1$ since more degrees of freedom are allowed: one of
the two norms $\Vert\mathbf{\Sigma}_S^{1/2}\bb\Vert_2$ and $\Vert\btheta\Vert_2$ can 
be zero.

\item Note that we use $\bfSigma_S=\bfSigma+\bfSigma_{\mathbf{E}}$, rather than $\bf\Sigma$, in the definition of $V(r_1,r_2)$. The intuition is that $\bfSigma_S$ is the covariance matrix of the rows of $\mathbf{X}_{\mathcal{R}}=\mathbf{Y}+\bfE_{\mathcal{R}}$. Indeed, $\mathbf{Y}$ and $\bfE_{\mathcal{R}}$ are independent. Hence, $V(r_1,r_2)$ ``incorporates'' the interaction between the design $\bfY$ and the random corruption $\bfE_{\mathcal{R}}$. This will provide a tighter control of $\bfE_{\mathcal{R}}$: our bounds will not depend on $\sigma_{\max}({\bfSigma}_{\bfE})$, which is potentially  $\mathcal{O}(p)$ when the
coordinates of outliers are highly correlated.
\end{itemize}

As already mentioned, a challenge in our approach is to establish a deterministic 
lower bound for the random form $\bb\mapsto\Vert\mathbf{Y}^{(n)}\bb\Vert_2^2$ and, at the same time, to take into account all the random terms in \eqref{equation:cross:terms:fast}-\eqref{equation:cross:terms:slow}. At the heart of our analysis is a ``\emph{splitting argument}'', which we explain next (see Lemmas \ref{lemma:expectation}-\ref{lemma:aux2} in the supplementary material). We split the task of lower bounding 
\eqref{equation:inf:V:r1:r2} into three parts, thanks to the inequality  
\begin{align}
\inf_{[\bb;\btheta]\in V(r_1,r_2)} &\Vert\mathbf{X}^{(n)}\bb-\btheta\Vert_2\\
&\ge 
\frac{\mathsf{I}_1\wedge 1}{\sqrt{2}}
-\sqrt{2\mathsf{I}_2}
-\frac{\sigma_{\max}(\bfE_{\mathcal{D}}{\bfSigma}_S^{\dagger/2})}{\sqrt{n}},
\label{equation:splitting}
\end{align}
where 
$
\mathsf{I}_1:=\inf_{\bb\in V_1\left(\sqrt{2}r_1\right)}\Vert\mathbf{X}_{\mathcal{R}}^{(n)}\bb\Vert_2
$
and
\begin{align}
\mathsf{I}_2:=\sup_{[\bb;\btheta]\in V(r_1,r_2)}\btheta^\top\mathbf{X}_{\mathcal{R}}^{(n)}\bb.
\end{align}
In the above formula, 
\begin{align}
V_1(r_1):=\left\{\bb\in\re^p:\Vert\mathbf{\Sigma}_S^{1/2}\bb\Vert_2=1,\Vert \bb\Vert_1\le r_1\right\}.
\end{align}
The splitting \eqref{equation:splitting} is justified as follows.  The third term 
in \eqref{equation:splitting} allows us to separate the impact of the (possibly
deterministic) corruption $\bfE_{\mathcal{D}}$. 
The first and the second terms in \eqref{equation:splitting} account for randomness in a precise way. Note that  the random quadratic form 
\begin{align}
V_1(\sqrt{2}r_1)\ni\bb\mapsto\Vert\mathbf{X}_{\mathcal{R}}^{(n)}\bb\Vert_2^2
\label{equation:random:quad:form}
\end{align}
in $\mathsf{I}_1^2$ is much simpler to handle: it does not depend on $\btheta$ and it is defined by a random matrix whose i.i.d. rows have distribution $\mathcal{N}_p(0,\mathbf{\Sigma}_S)$. Finally, a crucial point is that an \emph{equality} constraint $\Vert\mathbf{\Sigma}_S^{1/2}\bb\Vert_2=1$ is maintained in 
$\mathsf{I}_1$. Although this causes a deterioration of numerical constants  
(approximately by factor $\sqrt{2}$), this appears to be crucial for getting a uniform lower bound of \eqref{equation:random:quad:form} {with high probability}\footnote{
This is done by means of Gordon's comparison inequality and Gaussian concentration inequality.} 
and to obtain a positive RE constant. Moreover, this technique will give a simpler way to handle the terms of the form \eqref{equation:cross:terms:fast} in \eqref{equation:splitting} which account for the interactions between the column-spaces of $\mathbf{E}_{\mathcal{R}}$ and $\mathbf{Y}$.

The second term $\sqrt{2\mathsf{I}_2}$ in \eqref{equation:splitting} only depends on terms of the form \eqref{equation:cross:terms:slow} which account for the interactions between the column-spaces of $\mathbf{Y}$ and $\mathbf{I}_n$ and of $\mathbf{E}_{\mathcal{R}}$ and $\mathbf{I}_n$. Moreover, differently than $\mathsf{I}_1$, the random bilinear form 
\begin{eqnarray}
V(r_1,r_2)\ni[\bb,\btheta]\mapsto\btheta^\top\mathbf{X}^{(n)}_{\mathcal{R}}\bb
\label{equation:random:bilinear:form}
\end{eqnarray}
found in $\mathsf{I}_2$ can be controlled by means of the \emph{relaxed} constraints $\Vert\mathbf{\Sigma}_S^{1/2}\bb\Vert_2\le1$ and $\Vert\btheta\Vert_2\le1$. Nevertheless, a major drawback of $\sqrt{2\mathsf{I}_2}$ when compared to $\mathsf{I}_1$ is the fact that it has a $\mathcal{O}(n^{-1/2})$-worst decay in the sample size $n$. In that respect, a tight simultaneous control between $\mathsf{I}_1$ and $\sqrt{\mathsf{I}_2}$ in \eqref{equation:splitting} must be properly balanced so that tight rates in $(s,o,n,p)$ are obtained.\footnote{Note that this has implications on the minimax rate of convergence of associated estimators (see Example \ref{example:robust:lasso} and Remark \ref{rem:robust:lasso}).} We use a bias reduction argument in order to achieve this goal: we establish a uniform upper bound \emph{in expectation} of \eqref{equation:random:bilinear:form} via a different use of Slepian's inequality over a ``correct'' set. As a result, $\bb$ and $\btheta$ can be decoupled.\footnote{A naive control by means of Cauchy-Schwarz would give a decay of one order worst.}

\subsection{Further details}
Once we obtain a bound in expectation for \eqref{equation:inf:V:r1:r2}, a next step is to establish a variance control and concentration (see Subsection \ref{ss:concentration} in the supplemental material). We give some remarks in that respect. The corruption parameter $\btheta$ acts as a ``bias'' and, hence, it has no impact on the variance. The same observation does not hold for the corruption matrix $\mathbf{E}_{\mathcal{R}}$. We will crucially use independence between $\mathbf{Y}$ and $\mathbf{E}_{\mathcal{R}}$.

A final step of the proof of Theorem \ref{theorem:general:vectors} is to extend the obtained concentration inequality restricted over $V(r_1,r_2)$ to the whole augmented space $\re^{p+n}$. This will be done using a standard peeling argument in subsection \ref{ss:peeling} of the supplemental material. A minor difference is that we work in the Cartesian product $\re^{p+n}$. This is the step where the effort of choosing the normalization \eqref{equation:normalization:aug} in the augmented space is appreciated. 

Finally, the proof of \Cref{theorem:general} for the multivariate case will be established by a direct, column-wise, application of \Cref{theorem:general:vectors}. The details are given in \Cref{ss:details}. Some technical lemmas are proven in the Appendix. 

\bibliography{RE_condition}

%
%
\onecolumn
\section{Proof of Theorems \ref{theorem:general:vectors} and \ref{theorem:general}}\label{s:proof}
We set some notations. By $\mathbb{S}^{k-1}$ we denote the unit sphere in $\re^k$  and, with a slight abuse of notation, $\re^k$ will be identified with $\re^{k\times1}$.

We start by specifying a simplified setting, which will be extended later. 
\begin{itemize}
\item[(i)] We will first obtain the lower bound of \Cref{theorem:general:vectors} 
over vectors $\bv\in\re^{p+n}$. The lower bound for matrices in terms of mixed-norms 
will be obtained from the vector case applied to each column of the matrix.
\item[(ii)] We will first obtain a restricted version of \Cref{theorem:general:vectors} 
over vectors satisfying $\bv\in V(r_1,r_2)$. Here, $r_1,r_2>0$ are arbitrary and 
$V(r_1,r_2)$ is the compact subset of vectors $\bv = [\bb;\btheta]\in\re^{p+n}$, 
defined in \eqref{equation:V(r1,r2)}, satisfying the $\ell_2$-norm constraint 
$\Vert{\bfSigma}_S^{1/2}\bb\Vert_2^2+\Vert \btheta\Vert_2^2=1$ and the $\ell_1$-norm inequality constraints $\Vert \bb\Vert_1\le r_1$ and $\Vert \btheta\Vert_1\le r_2$. 
The obtain the general statement will be lifted from $V(r_1,r_2)$ to $\re^{p+n}$ 
via a standard peeling argument and by invoking the homogeneity of the norm in 
 $\re^{p+n}$.
\end{itemize}
We define, for all $r_1,r_2>0$, the set
\begin{align}
V(r_1,r_2):=\left\{
[
\bb;
\btheta
]
\in\re^{(p+n)\times1}:\Vert{\bfSigma}_S^{1/2}\bb\Vert_2^2+\Vert\btheta\Vert_2^2=1, \Vert \bb\Vert_1\le r_1,\Vert\btheta\Vert_1\le r_2\right\},
\label{equation:V(r1,r2)}
\end{align}
and the sets
\begin{align}
V_1(r_1)&:=\{\bb\in\re^p:\Vert {\bfSigma}_S^{1/2}\bb\Vert_2=1,\Vert \bb\Vert_1\le r_1\},\\
\overline V_1(r_1)&:=\{\bb\in\re^p:\Vert{\bfSigma}_S^{1/2}\bb\Vert_2\le1,\Vert \bb\Vert_1\le r_1\},\\
\overline V_2(r_2)&:=\{\btheta\in\re^n:\Vert \btheta\Vert_2\le1,\Vert \btheta\Vert_1\le r_2\}.
\end{align}


Set $\bfM = \left[\bfX^{(n)}\,|\; -\bfI_n\,\right]$. Taking (i)-(ii) for granted, a 
first step in order to prove the lower bound of 
\Cref{theorem:general:vectors} is to show that the restricted random variable
$$
-\inf_{\bv\in 
V(r_1,r_2)}
\Vert\bfM \bv\Vert_2
=
\sup_{\bv\in V(r_1,r_2)}-\Vert\bfM \bv\Vert_2
=
\sup_{\bv\in V(r_1,r_2)}
\inf_{\bu\in\mathbb{S}^{n-1}}\bu^\top(-\bfM)\bv,
$$
is, with high probability, upper bounded by a negative number as long as ${r_1^2}/{n}$ 
and ${r_2^2}/{n}$ are sufficiently small. For that purpose, we shall define the 
augmented matrix 
$\mathbf{K}\in\re^{n\times (p+n)}$ by
$$
\mathbf{K}=\left[\bfX_{\mathcal{R}}\big|-\sqrt{n}\bfI_{n}\right].
$$
Note that, for every 
$\bv=[\bb;\btheta]\in V(r_1,r_2)$, we have 
$\Vert{\bfSigma}_S^{1/2}\bb\Vert_2\le1$ so that
$$
-\Vert(\sqrt{n}\bfM)\bv\Vert_2=-\left\Vert\mathbf{K}\bv+
\bfE_{\mathcal{D}}\bb\right\Vert_2
\le-\Vert\mathbf{K}\bv\Vert_2+\Vert\bfE_{\mathcal{D}}\bb\Vert_2
\le-\Vert\mathbf{K}\bv\Vert_2+\|\bfE_{\mathcal{D}}{\bfSigma}_S^{-1/2}\|.
$$
Hence, if we define
\begin{align}
M(r_1,r_2,\mathbf{K})
&:=\sup_{\bv\in V(r_1,r_2)}\inf_{\bu\in\mathbb{S}^{n-1}}\bu^\top(-\mathbf{K}^{(n)})\bv \\
&=\sup_{\bv\in V(r_1,r_2)}-\left\Vert\left(\bfY^{(n)}+\bfE^{(n)}_{\mathcal{R}}\right)\bb-\btheta\right\Vert_2,
\label{equation:def:M}
\end{align}
we have the following bound:
\begin{align}
\sup_{\bv\in V(r_1,r_2)}-\Vert\bfM \bv\Vert_2
\le M(r_1,r_2,\mathbf{K})+\|\bfE_{\mathcal{D}}^{(n)}{\bfSigma}_S^{-1/2}\|.
\label{equation:uncontamination:bound}
\end{align} 
We thus aim to obtain an upper bound for $M(r_1,r_2,\mathbf{K})$ with high probability.  

\subsection{Bound in expectation}\label{ss:expectation}

The first key step is to obtain an upper bound in expectation. This will be the most delicate part of the proof. 

\begin{lemma}[Bound in expectation via a splitting argument]\label{lemma:expectation}
For any $n\ge10$, $\epsilon,\alpha,\beta>0$ and $r_1,r_2>0$ for which $V(r_1,r_2)$ is nonempty, 
$$
\esp\left[M(r_1,r_2,\mathbf{K})\right]\le
-\frac{(3/4-\epsilon)}{\sqrt{2}}\left(1-e^{-\frac{n\epsilon^2}{2}}\right)+\frac{1}{2}\left(\frac{1}{\alpha}+\frac{1}{\beta}\right)+\left(2+\alpha\sqrt{2}\right)r_1\varrho({\bfSigma}_S)\sqrt{\frac{\log p}{n}}+\beta r_2\sqrt{\frac{2\log n}{n}}.
$$ 
\end{lemma}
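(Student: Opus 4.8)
The plan is to control $M(r_1,r_2,\mathbf{K})=\sup_{\bv\in V(r_1,r_2)}\bigl(-\|\bfX_{\mathcal R}^{(n)}\bb-\btheta\|_2\bigr)$ by separating a random quadratic form in $\bb$ alone from the bilinear interaction between $\bb$ and $\btheta$, via the splitting announced in \eqref{equation:splitting}. Writing $a:=\|\bfSigma_S^{1/2}\bb\|_2$ and $t:=\|\btheta\|_2$, so that $a^2+t^2=1$ on $V(r_1,r_2)$, I would first establish the deterministic inequality
\[
\inf_{\bv\in V(r_1,r_2)}\|\bfX_{\mathcal R}^{(n)}\bb-\btheta\|_2\ \ge\ \frac{\mathsf I_1\wedge 1}{\sqrt 2}-\sqrt{2\mathsf I_2},
\]
whence $\esp[M]\le-\tfrac{1}{\sqrt2}\esp[\mathsf I_1\wedge1]+\esp\bigl[\sqrt{2\mathsf I_2}\bigr]$. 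This follows by expanding $\|\bfX_{\mathcal R}^{(n)}\bb-\btheta\|_2^2=\|\bfX_{\mathcal R}^{(n)}\bb\|_2^2+t^2-2\btheta^\top\bfX_{\mathcal R}^{(n)}\bb$, bounding the cross term by $-2\mathsf I_2$, and checking $\|\bfX_{\mathcal R}^{(n)}\bb\|_2^2+t^2\ge\tfrac12(\mathsf I_1\wedge1)^2$ through a case split at $a=1/\sqrt2$: if $a\le1/\sqrt2$ then $t^2\ge1/2$, while if $a>1/\sqrt2$ the rescaled vector $\bb/a$ lies in $V_1(\sqrt2\,r_1)$ so that $\|\bfX_{\mathcal R}^{(n)}\bb\|_2\ge a\,\mathsf I_1>\mathsf I_1/\sqrt2$; one then invokes $\sqrt{u-v}\ge\sqrt u-\sqrt v$ (the asserted bound being vacuous when the radicand is negative). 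This reduces the task to a lower bound in expectation for $\mathsf I_1\wedge1$ and an upper bound in expectation for $\sqrt{2\mathsf I_2}$.

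For $\mathsf I_1=\inf_{\bb\in V_1(\sqrt2 r_1)}\|\bfX_{\mathcal R}^{(n)}\bb\|_2$ I would write $\bfX_{\mathcal R}=\bfZ\bfSigma_S^{1/2}$ with $\bfZ$ having i.i.d.\ $\calN(0,1)$ entries, so that $\mathsf I_1=\inf_{\bw\in W}\tfrac{1}{\sqrt n}\|\bfZ\bw\|_2$ over $W=\{\bfSigma_S^{1/2}\bb:\|\bw\|_2=1,\ \|\bb\|_1\le\sqrt2\,r_1\}\subset\mathbb S^{p-1}$. Gordon's comparison inequality, as used in \cite{raskutti:wainwright:yu2010}, then gives $\esp\,\mathsf I_1\ge\tfrac{1}{\sqrt n}\esp\|\mathbf g\|_2-\tfrac1{\sqrt n}\esp\sup_{\bw\in W}\langle\mathbf h,\bw\rangle$ with $\mathbf g\in\re^n$, $\mathbf h\in\re^p$ standard Gaussian; here $\tfrac1{\sqrt n}\esp\|\mathbf g\|_2\ge 3/4$ for $n\ge10$, and by H\"older $\sup_{\bw\in W}\langle\mathbf h,\bw\rangle\le\sqrt2\,r_1\|\bfSigma_S^{1/2}\mathbf h\|_\infty$, whose expectation is controlled by the Gaussian maximal bound in terms of $\varrho(\bfSigma_S)\sqrt{\log p}$. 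Since $\bfZ\mapsto\mathsf I_1$ is $\tfrac1{\sqrt n}$-Lipschitz, Gaussian concentration yields $\prob[\mathsf I_1\le\esp\mathsf I_1-\epsilon]\le e^{-n\epsilon^2/2}$, and a truncation step (using $\mathsf I_1\wedge1\ge(\esp\mathsf I_1-\epsilon)\,\mathbf 1\{\mathsf I_1>\esp\mathsf I_1-\epsilon\}$ and $\esp\mathsf I_1\le1$) produces
\[
\esp[\mathsf I_1\wedge1]\ \ge\ (3/4-\epsilon)\bigl(1-e^{-n\epsilon^2/2}\bigr)-C_1\,r_1\varrho(\bfSigma_S)\sqrt{\tfrac{\log p}{n}},
\]
the width term being pulled out with coefficient one because $1-e^{-n\epsilon^2/2}\le1$.

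For the bilinear term I would relax the equality constraint, exploiting $V(r_1,r_2)\subset\overline V_1(r_1)\times\overline V_2(r_2)$, so that $\mathsf I_2\le\sup_{\bb\in\overline V_1(r_1),\,\btheta\in\overline V_2(r_2)}\btheta^\top\bfX_{\mathcal R}^{(n)}\bb$. Over this product of balls Chevet's inequality (equivalently a \emph{correct-set} application of Slepian's comparison) decouples the two factors and gives
\[
\esp\,\mathsf I_2\ \le\ \tfrac{1}{\sqrt n}\Bigl(r_1\,\esp\|\bfSigma_S^{1/2}\mathbf h\|_\infty+r_2\,\esp\|\mathbf g\|_\infty\Bigr)\ \le\ C_2\Bigl(r_1\varrho(\bfSigma_S)\sqrt{\tfrac{\log p}{n}}+r_2\sqrt{\tfrac{\log n}{n}}\Bigr);
\]
this is precisely the step that avoids the spurious product $r_1 r_2$ that a direct Cauchy--Schwarz/H\"older estimate would introduce. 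I would then pass to $\sqrt{2\mathsf I_2}$ by Jensen, split the square root additively across the two summands, and apply Young's inequality $\sqrt{x}\le\tfrac{1}{2\lambda}+\tfrac{\lambda}{2}x$ separately to the $r_1$- and $r_2$-parts with $\lambda=\alpha$ and $\lambda=\beta$ respectively. This generates the constant $\tfrac12\bigl(\tfrac1\alpha+\tfrac1\beta\bigr)$ together with the $\alpha$- and $\beta$-scaled terms carrying $r_1\varrho(\bfSigma_S)\sqrt{\log p/n}$ and $r_2\sqrt{2\log n/n}$, restoring the $n^{-1/2}$ decay that a bare Jensen bound would degrade to $n^{-1/4}$. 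Adding the three estimates and combining the two width contributions into the coefficient $(2+\alpha\sqrt2)$ yields the stated inequality, the Gaussian-norm bounds supplying the numerical constants.

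I expect the crux to be the handling of $\mathsf I_2$ rather than the now-routine Gordon bound for $\mathsf I_1$. The delicacy is that one must \emph{simultaneously} keep the equality normalization in $\mathsf I_1$ (source of the $1/\sqrt2$ loss in the splitting), relax it for $\mathsf I_2$ so that Chevet applies and $\bb,\btheta$ genuinely decouple, and then trade the resulting $\sqrt{\esp\mathsf I_2}$ — which on its own decays only as $n^{-1/4}$ — against the free parameters $\alpha,\beta$ through Young's inequality, so that for fixed $\alpha,\beta$ the $r_1,r_2$ contributions recover the $n^{-1/2}$ rate at the cost of only an additive constant. Making these three mechanisms cooperate, while tracking the constants closely enough to reach the explicit form in the statement, is where the real work lies.
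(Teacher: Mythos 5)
Your proposal is correct and follows essentially the same route as the paper: the same case split at $\Vert\bfSigma_S^{1/2}\bb\Vert_2^2=1/2$ yielding the splitting into $\frac{\mathsf I_1\wedge 1}{\sqrt 2}-\sqrt{2\mathsf I_2}$, the same Gordon-plus-concentration treatment of $\mathsf I_1$ (the paper's Lemma \ref{lemma:aux1}), the same relaxation to $\overline V_1(r_1)\times\overline V_2(r_2)$ with a Slepian/Sudakov--Fernique decoupling for $\mathsf I_2$ (the paper's Lemma \ref{lemma:aux2}), and the same Jensen-then-Young step with parameters $\alpha,\beta$ producing the additive constant $\frac{1}{2}(\frac{1}{\alpha}+\frac{1}{\beta})$. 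No gaps.
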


To prove the previous lemma we shall need the two following intermediate results whose proofs are postponed to the Appendix for increased readability.
\begin{lemma}[A lower bound in high-probability for standard Gaussian ensembles]\label{lemma:aux1}
For all $n\ge10$, $t>0$ and $r_1>0$ for which $V_1(r_1)$ is nonempty, with probability at least $1-\exp(-nt^2/2)$,
$$
\inf_{\bb\in V_1(r_1)}\left\Vert\bfX_{\mathcal{R}}^{(n)}\bb\right\Vert_2\ge\frac{3}{4}-t-r_1\varrho({\bfSigma}_S)\sqrt{\frac{2\log p}{n}}.
$$
\end{lemma}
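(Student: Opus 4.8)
The plan is to combine Gordon's Gaussian min--max comparison inequality (to control the expectation) with the Gaussian concentration inequality (to control the fluctuations). First I would reduce the random quantity to a canonical Gaussian min--max form. Since the rows of $\bfX_{\mathcal{R}}=\bfY+\bfE_{\mathcal{R}}$ are i.i.d.\ $\calN_p(\mathbf 0,\bfSigma_S)$, I write $\bfX_{\mathcal{R}}=\mathbf{G}\,\bfSigma_S^{1/2}$ with $\mathbf{G}\in\re^{n\times p}$ having i.i.d.\ standard normal entries. Using $\|\mathbf a\|_2=\sup_{\bu\in\mathbb{S}^{n-1}}\bu^\top\mathbf a$ and the normalization $\bfX_{\mathcal{R}}^{(n)}=\bfX_{\mathcal{R}}/\sqrt n$, the target becomes
\[
\inf_{\bb\in V_1(r_1)}\big\|\bfX_{\mathcal{R}}^{(n)}\bb\big\|_2
=\frac{1}{\sqrt n}\,\inf_{\bb\in V_1(r_1)}\ \sup_{\bu\in\mathbb{S}^{n-1}}\ \bu^\top\mathbf{G}\,\bfSigma_S^{1/2}\bb .
\]
On $V_1(r_1)$ the vector $\bw:=\bfSigma_S^{1/2}\bb$ satisfies $\|\bw\|_2=1$; this equality normalization is exactly what makes the subsequent comparison collapse cleanly.

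Next I would apply Gordon's inequality to the bilinear process $\bu^\top\mathbf{G}\,\bfSigma_S^{1/2}\bb$, minimized over $\bb\in V_1(r_1)$ and maximized over $\bu\in\mathbb{S}^{n-1}$. It lower bounds the expectation by that of the decoupled Gaussian process $\|\bfSigma_S^{1/2}\bb\|_2\,\mathbf g^\top\bu+\|\bu\|_2\,\mathbf h^\top\bfSigma_S^{1/2}\bb$, with independent standard Gaussians $\mathbf g\in\re^n$, $\mathbf h\in\re^p$. Using $\|\bfSigma_S^{1/2}\bb\|_2=1$ on $V_1(r_1)$ and $\|\bu\|_2=1$ on $\mathbb{S}^{n-1}$, the inner supremum equals $\|\mathbf g\|_2+\mathbf h^\top\bfSigma_S^{1/2}\bb$, so that
\[
\esp\Big[\inf_{\bb\in V_1(r_1)}\sup_{\bu\in\mathbb{S}^{n-1}}\bu^\top\mathbf{G}\,\bfSigma_S^{1/2}\bb\Big]
\ge\esp\|\mathbf g\|_2+\esp\Big[\inf_{\bb\in V_1(r_1)}\mathbf h^\top\bfSigma_S^{1/2}\bb\Big].
\]
I would bound the first term by $\esp\|\mathbf g\|_2\ge\tfrac34\sqrt n$ for $n\ge10$ (recall $\esp\|\mathbf g\|_2/\sqrt n\to1$). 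For the second term I relax the equality constraint to $\|\bb\|_1\le r_1$ and invoke $\ell_1/\ell_\infty$ duality, $\inf_{\|\bb\|_1\le r_1}\mathbf h^\top\bfSigma_S^{1/2}\bb=-r_1\|\bfSigma_S^{1/2}\mathbf h\|_\infty$, together with the Gaussian maximal inequality $\esp\|\bfSigma_S^{1/2}\mathbf h\|_\infty\le\varrho(\bfSigma_S)\sqrt{2\log p}$ (each coordinate of $\bfSigma_S^{1/2}\mathbf h$ is centered Gaussian with variance $(\bfSigma_S)_{jj}\le\varrho(\bfSigma_S)^2$). Dividing by $\sqrt n$ gives $\esp\big[\inf_{\bb\in V_1(r_1)}\|\bfX_{\mathcal{R}}^{(n)}\bb\|_2\big]\ge\tfrac34-r_1\varrho(\bfSigma_S)\sqrt{2\log p/n}$.

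Finally I would upgrade this to a high-probability statement by Gaussian concentration. Viewed as a function of the i.i.d.\ standard Gaussian matrix $\mathbf{G}$, the map $\mathbf{G}\mapsto\inf_{\bb\in V_1(r_1)}\|\bfX_{\mathcal{R}}^{(n)}\bb\|_2$ is an infimum of the maps $\mathbf{G}\mapsto\frac{1}{\sqrt n}\|\mathbf{G}\bw\|_2$, each of which is $\frac{1}{\sqrt n}\|\bw\|_2=\frac{1}{\sqrt n}$-Lipschitz in the Frobenius norm; hence the infimum is itself $\frac{1}{\sqrt n}$-Lipschitz. The Gaussian concentration inequality then yields, for every $t>0$, a downward deviation of at most $t$ with probability at most $\exp\!\big(-t^2/(2\,n^{-1})\big)=\exp(-nt^2/2)$, and combining with the expectation bound gives the claim.

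The main obstacle I anticipate is the correct invocation of Gordon's comparison: one must set up the min over $\bb$ and the max over $\bu$ in the order matching the bilinear kernel $\bu^\top\mathbf{G}\,\bfSigma_S^{1/2}\bb$, and it is precisely the normalization $\|\bfSigma_S^{1/2}\bb\|_2=1$ on $V_1(r_1)$ that makes the comparison process separate into $\|\mathbf g\|_2$ plus a term linear in $\bb$, decoupling $\bb$ from $\bu$. A secondary point requiring care is the Lipschitz bookkeeping in the concentration step, so that the constant is exactly $1/\sqrt n$ and the exponent matches $nt^2/2$, together with the mild slack in the maximal inequality (strictly $\sqrt{2\log(2p)}$), which I fold into $\sqrt{2\log p}$.
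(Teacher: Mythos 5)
Your proposal is correct and follows exactly the route the paper takes: the paper's own proof of this lemma simply defers to Lemmas 1--2 of \cite{raskutti:wainwright:yu2010}, which rest on the same two ingredients you use (Gordon's min--max comparison for the expectation, then Borell--TIS concentration via the $1/\sqrt n$-Lipschitz property, with the one-sided tail giving $1-\exp(-nt^2/2)$). The only caveat is the $\sqrt{2\log(2p)}$ versus $\sqrt{2\log p}$ slack in the $\ell_\infty$ maximal inequality, which you flag and which the paper itself also elides in its proof of Lemma \ref{lemma:aux2}.
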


\begin{lemma}[An upper bound in expectation for corrupted Gaussian designs]\label{lemma:aux2}
For all $r_2,r_2>0$ for which $V(r_1,r_2)$ is nonempty,
$$
\esp\left[\sup_{[\bb;\btheta]\in V(r_1,r_2)}\bb^\top(\bfX_{\mathcal{R}}^{(n)})^\top\btheta\right]\le r_1\varrho({\bfSigma}_S)\sqrt{\frac{2\log p}{n}}+r_2\sqrt{\frac{2\log n}{n}}. 
$$
\end{lemma}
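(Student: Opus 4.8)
The plan is to view the random bilinear form as a centered Gaussian process in $(\bb,\btheta)$ and to bound its expected supremum by a Gaussian comparison (Slepian--Sudakov--Fernique) argument that \emph{decouples} the two variables. Decoupling is the whole point: on $V(r_1,r_2)$ the quantity $\btheta^\top\bfX_{\mathcal{R}}^{(n)}\bb$ is a product in which each factor is of order one, so a naive Cauchy--Schwarz bound $\btheta^\top\bfX_{\mathcal{R}}^{(n)}\bb\le\|\btheta\|_2\,\|\bfX_{\mathcal{R}}^{(n)}\bb\|_2$ would reintroduce the operator-norm scale of $\bfX_{\mathcal{R}}^{(n)}$ and only produce an $O(1)$ bound, losing the $\sqrt{\log/n}$ decay we need.

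First I would reduce to a standard Gaussian ensemble. Since $\bfY$ and $\bfE_{\mathcal{R}}$ are independent, the rows of $\bfX_{\mathcal{R}}=\bfY+\bfE_{\mathcal{R}}$ are i.i.d.\ $\calN_p(\mathbf 0,\bfSigma_S)$, so we may take $\bfX_{\mathcal{R}}=\mathbf{G}\,\bfSigma_S^{1/2}$ in distribution, with $\mathbf{G}\in\re^{n\times p}$ having i.i.d.\ $\calN(0,1)$ entries. Writing $\tilde{\bb}:=\bfSigma_S^{1/2}\bb$, we get
\[
\bb^\top(\bfX_{\mathcal{R}}^{(n)})^\top\btheta=\tfrac{1}{\sqrt n}\,\btheta^\top\mathbf{G}\,\tilde{\bb},
\]
a centered Gaussian process indexed by $(\tilde{\bb},\btheta)$. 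On $V(r_1,r_2)$ we have simultaneously $\|\tilde{\bb}\|_2\le 1$, $\|\btheta\|_2\le 1$, $\|\bb\|_1\le r_1$ and $\|\btheta\|_1\le r_2$, so I would relax the supremum to the product set $S\times T$ with $S:=\bfSigma_S^{1/2}\overline V_1(r_1)$ and $T:=\overline V_2(r_2)$; since this only enlarges the feasible set, it is legitimate for an upper bound. The \emph{correct} relaxation keeps \emph{both} the $\ell_1$-radii (which will produce the widths) and the $\ell_2$-radii $\le 1$ (which keep the comparison cross-terms bounded); dropping either constraint would ruin the estimate.

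Next I would invoke Chevet's inequality, i.e.\ the Sudakov--Fernique comparison of $(\tilde{\bb},\btheta)\mapsto\btheta^\top\mathbf{G}\tilde{\bb}$ with the decoupled process $\|\btheta\|_2\langle\mathbf{g},\tilde{\bb}\rangle+\|\tilde{\bb}\|_2\langle\mathbf{h},\btheta\rangle$ for independent $\mathbf{g}\sim\calN(\mathbf 0,\bfI_p)$, $\mathbf{h}\sim\calN(\mathbf 0,\bfI_n)$, which yields
\[
\esp\Big[\sup_{\tilde{\bb}\in S,\ \btheta\in T}\btheta^\top\mathbf{G}\tilde{\bb}\Big]\le w(S)\,\mathrm{rad}(T)+\mathrm{rad}(S)\,w(T),
\]
where $\mathrm{rad}(\cdot)$ denotes the $\ell_2$-radius and $w(\cdot):=\esp\sup\langle\mathbf{g},\cdot\rangle$ the Gaussian width. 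Both radii are at most $1$ by construction of $S,T$. For the widths, $\ell_1$/$\ell_\infty$ duality gives $w(S)=\esp\sup_{\bb\in\overline V_1(r_1)}\langle\bfSigma_S^{1/2}\mathbf{g},\bb\rangle\le r_1\,\esp\|\bfSigma_S^{1/2}\mathbf{g}\|_\infty$ and $w(T)\le r_2\,\esp\|\mathbf{h}\|_\infty$; since $\bfSigma_S^{1/2}\mathbf{g}\sim\calN(\mathbf 0,\bfSigma_S)$ has coordinatewise variance at most $\varrho(\bfSigma_S)^2$, the Gaussian maximal inequality gives $w(S)\le r_1\varrho(\bfSigma_S)\sqrt{2\log p}$ and $w(T)\le r_2\sqrt{2\log n}$. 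Reinstating the factor $\tfrac1{\sqrt n}$ produces the claimed bound.

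The main obstacle is precisely the decoupling step: one must choose the comparison process so that the coupling between $\bb$ and $\btheta$ is split \emph{additively} into $w(S)+w(T)$ rather than multiplicatively, so that no operator-norm term survives — this is what the paper calls ``a different use of Slepian's inequality over a correct set,'' and it is what saves the extra order of $n$ that Cauchy--Schwarz would cost. A secondary, purely bookkeeping matter is the maximal-inequality constant: the $\ell_1$-ball has $2p$ (resp.\ $2n$) signed vertices, so one really controls a two-sided maximum and must absorb the harmless $\log 2$ into $\log p$ (resp.\ $\log n$) to reach the stated constants, while carrying the factor $\varrho(\bfSigma_S)$ coming from the correlated design coordinates.
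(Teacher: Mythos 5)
Your proposal is correct and follows essentially the same route as the paper: relax the supremum to the product set $\overline V_1(r_1)\times\overline V_2(r_2)$, reduce to a standard Gaussian ensemble via $\bfX_{\mathcal R}=\widetilde{\bfX}_{\mathcal R}\bfSigma_S^{1/2}$, decouple by Sudakov--Fernique (the paper constructs the comparison process $Z_{\bb,\btheta}=(\bfSigma_S^{1/2}\bb)^\top\bfX^\top\overline\btheta+\overline\bb^\top\overline{\bfX}^\top\btheta$ by hand and verifies the increment inequality, which is exactly the content of the Chevet bound you invoke), and finish with $\ell_1$/$\ell_\infty$ duality and the Gaussian maximal inequality before normalizing by $\sqrt n$.
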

 
\begin{proof}[Proof of Lemma \ref{lemma:expectation}]
In the sequel, we set $M:=M(r_1,r_2,\mathbf{K})$ for simplicity of notation. We also 
note that $\mathbf{\widetilde X}_{\mathcal{R}}=\bfX_{\mathcal{R}}{\bfSigma}_S^{-1/2}$ 
is a standard Gaussian ensemble. Indeed, for any $i\in [n]$, $(\bfX_{\mathcal{R}})_{i,\bullet}=\bfY_{i,\bullet}+(\bfE_{\mathcal{R}})_{i,\bullet}$, is a sum of two independent centered Gaussian vectors. Hence, the rows of $\bfX_{\mathcal{R}}$ are i.i.d. centered Gaussian vectors.

Let  $[\bb;\btheta]$ be any point from $V(r_1,r_2)$. 
Since $\Vert{\bfSigma}_S^{1/2} \bb\Vert^2+\Vert \btheta\Vert^2=1$, we necessarily have 
one of the alternatives:
\begin{align}
\begin{cases}
\mbox{(A)}\quad\Vert\btheta\Vert_2^2\le\frac{1}{2}\mbox{ and }\Vert {\bfSigma}_S^{1/2}\bb\Vert_2^2\ge\frac{1}{2}\\
\mbox{(B)}\quad\Vert\btheta\Vert_2^2\ge\frac{1}{2}\mbox{ and }\Vert {\bfSigma}_S^{1/2}\bb\Vert_2^2\le\frac{1}{2}.
\end{cases}
\end{align} 
In case (B) holds, $\Vert\bfX_{\mathcal{R}}^{(n)}\bb\Vert_2^2+\Vert\btheta\Vert_2^2\ge1/2$. On the other hand, in case (A) holds, we have 
\begin{align}
\sqrt{\Vert\bfX^{(n)}_{\mathcal{R}}\bb\Vert_2^2+\Vert\btheta\Vert_2^2}
&\ge \Vert\bfX_{\mathcal{R}}^{(n)}\bb\Vert_2 \\
&\ge \inf\left\{\Vert\bfX_{\mathcal{R}}^{(n)}\bb'\Vert_2:\frac{1}{2}\le\Vert {\bfSigma}_S^{1/2}\bb'\Vert_2^2\le1,\Vert \bb'\Vert_1\le r_1\right\} \\
&=\inf\left\{\left\Vert\mathbf{\widetilde X}_{\mathcal{R}}^{(n)}\left(\frac{{\bfSigma}_S^{1/2}\bb'}{\Vert{\bfSigma}_S^{1/2}\bb'\Vert_2}\right)\right\Vert_2\Vert {\bfSigma}_S^{1/2}\bb'\Vert_2:\frac{1}{2}\le\Vert{\bfSigma}_S^{1/2} \bb'\Vert_2^2\le1,\Vert \bb'\Vert_1\le r_1\right\} \\
&\ge\frac{1}{\sqrt{2}}\inf\left\{\left\Vert\mathbf{\widetilde X}_{\mathcal{R}}^{(n)}\left(\frac{{\bfSigma}_S^{1/2}\bb'}{\Vert{\bfSigma}_S^{1/2}\bb'\Vert_2}\right)\right\Vert_2 :\frac{1}{2}\le\Vert{\bfSigma}_S^{1/2} \bb'\Vert_2^2\le1,\Vert \bb'\Vert_1\le r_1\right\} \\
&\ge \frac{1}{\sqrt{2}}\inf\left\{\left\Vert\mathbf{\widetilde X}_{\mathcal{R}}^{(n)}\mathbf{y}\right\Vert_2:\Vert\mathbf{y}\Vert_2=1,\left\Vert {\bfSigma}_S^{-1/2}\mathbf{y}\right\Vert_1\le \sqrt{2}r_1\right\}\\
&=\frac{1}{\sqrt{2}}\inf_{\bb\in V_1(\sqrt{2}r_1)}\left\Vert\mathbf{X}_{\mathcal{R}}^{(n)}\bb\right\Vert_2.
\end{align}
In the forth inequality above, we have used the fact that 
\begin{align}
\left\{\frac{{\bfSigma}_S^{1/2}\bb'}{\left\Vert{\bfSigma}_S^{1/2}\bb'\right\Vert_2}:\frac{1}{2}\le\Vert{\bfSigma}_S^{1/2} \bb'\Vert_2^2\le1,\Vert \bb'\Vert_1\le r_1\right\}\subset {\bfSigma}_S^{1/2}\cdot V_1\left(\sqrt{2}r_1\right),\label{lemma:expectation:eq0}
\end{align}
where 
$
{\bfSigma}_S^{1/2}\cdot V_1(\sqrt{2}r_1):=
\{\mathbf{y}\in\re^p:\Vert\mathbf{y}\Vert_2=1,\Vert {\bfSigma}_S^{-1/2}\mathbf{y}\Vert_1\le \sqrt{2}r_1\}.
$
Indeed, given $\mathbf{y}:=\frac{{\bfSigma}_S^{1/2}\bb'}{\Vert{\bfSigma}_S^{1/2}\bb'\Vert_2}$ with $\bb'$ belonging to the set in the left hand side of \eqref{lemma:expectation:eq0}, we have that $\Vert\mathbf{y}\Vert_2=1$ and
$$
r_1\ge \Vert \bb'\Vert_1 =\left\Vert {\bfSigma}_S^{-1/2}\mathbf{y}\right\Vert_1\cdot\left\Vert {\bfSigma}_S^{1/2}\bb'\right\Vert_2 \ge\frac{\left\Vert {\bfSigma}_S^{-1/2}\mathbf{y}\right\Vert_1}{\sqrt{2}}. 
$$

From the previous conclusions of conditions (A) or (B), we obtain that \emph{for all} $(\bb^\top,\btheta^\top)^\top\in V(r_1,r_2)$,
\begin{align}
\Vert\bfX_{\mathcal{R}}^{(n)}\bb\Vert_2^2+\Vert\btheta\Vert_2^2\ge
\frac{1}{2}\min\left\{\inf_{\bb'\in V_1(\sqrt{2}r_1)}\Vert\mathbf{X}_{\mathcal{R}}^{(n)}\bb'\Vert_2^2,1\right\}.
\label{lemma:expectation:eq1}
\end{align}
From \eqref{lemma:expectation:eq1}, we obtain that 
\begin{align}
M^2&=\inf_{[\bb;\btheta]\in V(r_1,r_2)}
\Vert\bfX_{\mathcal{R}}^{(n)}\bb-\btheta\Vert_2^2\\
&=\inf_{[\bb;\btheta]\in V(r_1,r_2)}\left[\Vert\bfX_{\mathcal{R}}^{(n)}\bb\Vert_2^2
+\Vert\btheta\Vert_2^2-2\bb^\top(\bfX_{\mathcal{R}}^{(n)})^\top\btheta\right]\\
&\ge \frac{1}{2}\min\bigg\{\inf_{\bb'\in V_1(\sqrt{2}r_1)}\Vert\mathbf{X}_{\mathcal{R}}^{(n)}\bb'\Vert_2^2\,,1\bigg\}
-\Big|\sup_{[\bb;\btheta]\in V(r_1,r_2)}2\bb^\top(\bfX_{\mathcal{R}}^{(n)})^\top\btheta
\Big|.
\end{align}
After rearranging and taking the square root, we obtain
\begin{align}
M=-|M|\le -\frac{1}{\sqrt{2}}\min\bigg\{\inf_{\bb\in V_1(\sqrt{2}r_1)}\Vert\mathbf{X}_{\mathcal{R}}^{(n)}\bb\Vert_2\,,1\bigg\}+\Big(2\sup_{
[\bb;\btheta]\in V(r_1,r_2)}\bb^\top(\bfX_{\mathcal{R}}^{(n)})^\top\btheta\Big)^{1/2}.
\label{lemma:expectation:eq2}
\end{align}
It follows from the above expression, that we need to lower bound the mapping
$V_1\left(\sqrt{2}r_1\right)\ni \bb\mapsto\Vert\mathbf{X}_{\mathcal{R}}^{(n)}\bb\Vert_2,$
depending solely on the ``pure'' parameter-vector $\bb$ and the Gaussian design 
$\mathbf{X}_{\mathcal{R}}$ with i.i.d. $\mathcal{N}_p(0,{\bfSigma}_S)$ rows. Moreover, it is sufficient to obtain an upper bound on the bilinear form
$
V(r_1,r_2)\ni(\bb,\btheta)\mapsto \btheta^\top\bfX_{\mathcal{R}}^{(n)}\bb,
$
which depends on the interaction between $\bb$ and $\btheta$ as well as on 
$\bfE_{\mathcal{R}}$. 

We first use Lemma \ref{lemma:aux2} to bound the expectation of the second term in the RHS of \eqref{lemma:expectation:eq2}. This and Jensen's inequality imply that
\begin{align}
\esp\left[\Big(2\sup_{
[\bb;\btheta]\in V(r_1,r_2)}\bb^\top(\bfX_{\mathcal{R}}^{(n)})^\top\btheta\Big)^{1/2}
\right]
&\le \esp\left[2\sup_{[\bb;\btheta]\in V(r_1,r_2)}\bb^\top(\bfX_{\mathcal{R}}^{(n)})^\top\btheta\right]^{1/2} \\
&\le \bigg\{2\varrho({\bfSigma}_S)r_1\sqrt{\frac{2\log p}{n}}+2r_2\sqrt{\frac{2\log n}{n}}\bigg\}^{1/2} \\
&\le \bigg\{\varrho({\bfSigma}_S)r_1\sqrt{\frac{8\log p}{n}}\bigg\}^{1/2}
+\bigg\{r_2\sqrt{\frac{8\log n}{n}}\bigg\}^{1/2} \\
&\le \frac{\alpha\varrho({\bfSigma}_S)r_1}{2}\sqrt{\frac{8\log p}{n}}+
\frac{1}{2\alpha}
+\frac{\beta r_2}{2}\sqrt{\frac{8\log n}{n}}+\frac{1}{2\beta},
\label{lemma:expectation:eq3}
\end{align}
for all $\alpha,\beta>0$. In above, we have used the well known bound $2\sqrt x
\le\lambda x+\lambda^{-1}$ with $x:=\frac{8}{n}\varrho({\bfSigma}_S)^2r_1^2\log p$, 
$\lambda:=\alpha$ and with $x:={\frac{8}{n}r_2^2\log n}$, $\lambda:=\beta$.

We now give a lower bound estimate for the expectation of the first term in the RHS of \eqref{lemma:expectation:eq2}. For all $\epsilon>0$, we define the event 
$$
A_{\epsilon}(r_1):=\left\{\inf_{\bb\in V_1(\sqrt{2}r_1)}\Vert\mathbf{X}_{\mathcal{R}}^{(n)}\bb\Vert_2\ge3/4-\epsilon-(\sqrt{2}r_1)\varrho({\bfSigma}_S)\sqrt{\frac{2\log p}{n}}\right\},
$$
so that
\begin{align}
\mathsf{1}_{A_\epsilon}(r_1)\min\left\{\inf_{\bb\in V_1(\sqrt{2}r_1)}\Vert\mathbf{X}_{\mathcal{R}}^{(n)}\bb\Vert_2,1\right\}&\ge \mathsf{1}_{A_\epsilon(r_1)}\left(3/4-\epsilon-2r_1\varrho({\bfSigma}_S)\sqrt{\frac{\log p}{n}}\right),\label{lemma:expectation:eq3'}\\
\mathsf{1}_{A_\epsilon^c(r_1)}\min\left\{\inf_{\bb\in V_1(\sqrt{2}r_1)}\Vert\mathbf{X}_{\mathcal{R}}^{(n)}\bb\Vert_2,1\right\}&\ge 0.\label{lemma:expectation:eq3''}
\end{align}

From \eqref{lemma:expectation:eq2}-\eqref{lemma:expectation:eq3''} and the set partition $\mathsf{1}=\mathsf{1}_{A_\epsilon}(r_1)+\mathsf{1}_{A_\epsilon^c}(r_1)$, we obtain that, for all $\epsilon,\alpha,\beta>0$,
\begin{align}
\esp[M]&\le -\frac{(3/4-\epsilon)}{\sqrt{2}}\prob(A_\epsilon(r_1))+\frac{1}{2}(\alpha^{-1}+\beta^{-1}) \\
&+(2+\sqrt{2}\alpha)r_1\varrho({\bfSigma}_S)\sqrt{\frac{\log p}{n}}+\beta r_2\sqrt{\frac{2\log n}{n}}.
\label{lemma:expectation:eq4}
\end{align}
From Lemma \ref{lemma:aux1}, we can control the above expectation since $\prob(A_\epsilon(r_1))=1-e^{-n\epsilon^2/2}$. This and \eqref{lemma:expectation:eq4} finish the proof. 
\end{proof}

\subsection{Gaussian concentration inequality}\label{ss:concentration}
From Lemma \ref{lemma:expectation}, we obtain that, for any $n\ge10$ and $\epsilon,\alpha,\beta>0$, 
\begin{align}
\esp\left[M(r_1,r_2,\mathbf{K})\right]+1-\left(\frac{1}{2\alpha}+\frac{1}{2\beta}\right)\le t_{\epsilon}(r_1,r_2),
\label{equation:M:centering}
\end{align}
where we have defined the quantity
\begin{align}
t_{\epsilon}(r_1,r_2)&:=1-\frac{(3/4-\epsilon)}{\sqrt{2}}\left(1-e^{-\frac{n\epsilon^2}{2}}\right)+\left(2+\alpha\sqrt{2}\right)r_1\varrho({\bfSigma}_S)\sqrt{\frac{\log p}{n}}+\beta r_2\sqrt{\frac{2\log n}{n}}.
\label{equation:t(r1,r2)}	
\end{align}

After obtaining a control of the expectation, we shall now obtain a control on the variance of the random variable $M(r_1,r_2,\mathbf{K})$ (defined as an extremum of an
empirical process) in order to obtain an upper tail inequality for $M(r_1,r_2,\mathbf{K})-\esp\left[M(r_1,r_2,\mathbf{K})\right]$. In that respect, we make three important observations in our context where contamination is present. 
\begin{itemize}
\item[(i)] The corruption vector $\btheta$ acts as a ``bias'' in the empirical process of \eqref{equation:def:M}. Hence, in order to control the variance, the constraint over $\btheta$ has no significant impact.
\item[(ii)] We will crucially use that $\bfY{\perp\!\!\!\perp}\bfE_{\mathcal{R}}$.
\item[(iii)] Taking (ii) for granted, we will control $\bb\mapsto\mathbf
{X}_{\mathcal{R}}\bb$ using that $\bfX_{\mathcal{R}}$ is a Gaussian design (hence, we may use the Gaussian concentration inequality).
\end{itemize}

\begin{lemma}[Concentration around the mean]\label{lemma:concentration:mean}
For any $n\ge10$, $\epsilon\in(0,3/4)$, $\alpha,\beta,\sigma>0$ and $r_1,r_2>0$ for which $V(r_1,r_2)$ is nonempty,
\begin{align}
\prob\left\{M(r_1,r_2,\mathbf{K})+1\ge\left(1+\frac{1}{\sigma}\right)t_{\epsilon}(r_1,r_2)\right\}\le\exp\left[-\frac{t_{\epsilon}(r_1,r_2)^2}{2\sigma^2}n\right].
\end{align}
\end{lemma}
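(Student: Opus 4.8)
The plan is to recognize $M(r_1,r_2,\mathbf K)$ as a Lipschitz functional of a single standard Gaussian ensemble and then invoke the Gaussian concentration (Borell--Sudakov--Tsirelson) inequality, feeding in the expectation estimate \eqref{equation:M:centering} already derived from \Cref{lemma:expectation}. First I would reuse the observation made in the proof of \Cref{lemma:expectation} that $\mathbf G:=\mathbf{\widetilde X}_{\mathcal R}=\bfX_{\mathcal R}{\bfSigma}_S^{-1/2}$ is a standard Gaussian ensemble: since $\bfX_{\mathcal R}=\bfY+\bfE_{\mathcal R}$ is a sum of two \emph{independent} Gaussian matrices with row-covariances ${\bfSigma}$ and ${\bfSigma}_{\bfE}$, its rows are i.i.d.\ ${\calN}_p(\mathbf 0,{\bfSigma}_S)$, so $\mathbf G$ has i.i.d.\ standard normal entries (with the usual pseudo-inverse convention on the range of ${\bfSigma}_S$ if it is singular). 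This is precisely where the independence $\bfY{\perp\!\!\!\perp}\bfE_{\mathcal R}$ enters, matching observations (ii)--(iii) preceding the lemma. Writing $\bfX_{\mathcal R}^{(n)}\bb=\tfrac1{\sqrt n}\mathbf G({\bfSigma}_S^{1/2}\bb)$, I would regard $M$ as a function $f(\mathbf G)$ of the $n\times p$ matrix $\mathbf G$.

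The key step is the Lipschitz bound. For two ensembles $\mathbf G,\mathbf G'$, passing the difference of suprema inside and applying the reverse triangle inequality gives
\[
|f(\mathbf G)-f(\mathbf G')|\le \sup_{[\bb;\btheta]\in V(r_1,r_2)}\frac1{\sqrt n}\big\|(\mathbf G-\mathbf G'){\bfSigma}_S^{1/2}\bb\big\|_2\le \frac{\|\mathbf G-\mathbf G'\|}{\sqrt n}\sup_{[\bb;\btheta]\in V(r_1,r_2)}\big\|{\bfSigma}_S^{1/2}\bb\big\|_2.
\]
Because the normalization defining $V(r_1,r_2)$ forces $\|{\bfSigma}_S^{1/2}\bb\|_2^2\le \|{\bfSigma}_S^{1/2}\bb\|_2^2+\|\btheta\|_2^2=1$, and the spectral norm is dominated by the Frobenius norm, $\|\mathbf G-\mathbf G'\|\le\|\mathbf G-\mathbf G'\|_{2,2}$, the map $f$ is $1/\sqrt n$-Lipschitz with respect to the Euclidean (Frobenius) norm on $\re^{n\times p}$. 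I would stress two features: the corruption vector $\btheta$ enters only as a translation and cancels in the difference, so it has no effect on the Lipschitz constant (observation (i)); and the constant is capped by $\sup_{V}\|{\bfSigma}_S^{1/2}\bb\|_2\le1$ rather than by $\sigma_{\max}({\bfSigma}_S)$, which is exactly why the resulting bound is free of $\sigma_{\max}({\bfSigma}_{\bfE})$.

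With the Lipschitz constant in hand, the Gaussian concentration inequality yields, for every $u>0$,
\[
\prob\big\{M(r_1,r_2,\mathbf K)\ge \esp[M(r_1,r_2,\mathbf K)]+u\big\}\le \exp\!\Big(-\tfrac{n u^2}{2}\Big).
\]
To conclude I would center $M$ by its mean using the estimate \eqref{equation:M:centering} and choose the deviation level $u$ proportional to $t_\epsilon(r_1,r_2)/\sigma$, so that the Gaussian exponent $nu^2/2$ matches the target $\tfrac{t_\epsilon(r_1,r_2)^2}{2\sigma^2}\,n$; the event $\{M+1\ge(1+1/\sigma)t_\epsilon(r_1,r_2)\}$ is then contained in the tail event above, which delivers the claimed bound.

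The main obstacle is the Lipschitz step, and specifically handling the functional on the correct space: one must first reduce to the standard ensemble $\mathbf G$ (checking via independence of $\bfY$ and $\bfE_{\mathcal R}$ that $\bfX_{\mathcal R}$ really is a centered Gaussian design, and dealing with a possibly singular ${\bfSigma}_S$ through its range and pseudo-inverse), and then see that the \emph{only} quantity controlling the Lipschitz constant is $\sup_{V}\|{\bfSigma}_S^{1/2}\bb\|_2\le1$ --- uniformly over $V(r_1,r_2)$ and independently of $\btheta$, of $p$, and of the spectrum of ${\bfSigma}_S$. This uniformity is the whole point of having imposed the augmented normalization $\|{\bfSigma}_S^{1/2}\bb\|_2^2+\|\btheta\|_2^2=1$ on $V(r_1,r_2)$ in the first place. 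Everything after that is a routine application of Gaussian concentration together with the already-established expectation bound.
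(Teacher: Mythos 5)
Your proposal is correct and takes essentially the same route as the paper's proof: reduce to the standard ensemble $\mathbf{\widetilde X}_{\mathcal R}=\bfX_{\mathcal R}\bfSigma_S^{-1/2}$ using the independence of $\bfY$ and $\bfE_{\mathcal R}$, show that $M(r_1,r_2,\mathbf K)$ is an $n^{-1/2}$-Lipschitz function of this ensemble because $\|\bfSigma_S^{1/2}\bb\|_2\le 1$ on $V(r_1,r_2)$ and $\btheta$ cancels in the difference, apply the Gaussian concentration inequality, and combine with the expectation bound \eqref{equation:M:centering} at deviation level $t_\epsilon(r_1,r_2)/\sigma$. The only differences are cosmetic (you cite Borell--Sudakov--Tsirelson where the paper cites Theorem 5.6 of Boucheron--Lugosi--Massart), and your final event-containment step involves exactly the same bookkeeping as the paper's.
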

\begin{proof}
We recall that, since $\bfY{\perp\!\!\!\perp}\bfE_{\mathcal{R}}$, $\bfX_{\mathcal{R}}=\bfY+\bfE_{\mathcal{R}}$ is a Gaussian ensemble with independent rows. In particular, we may write $\bfX_{\mathcal{R}}=\mathbf{\widetilde X}_{\mathcal{R}}{\bfSigma}_S^{1/2}$, where $\mathbf{\widetilde X}_{\mathcal{R}}\in\re^{n\times p}$ is a standard Gaussian ensemble.

Let $\bfY,\bfY'\in\re^{n\times p}$ and $\bfE_{\mathcal{R}},\bfE_{\mathcal{R}}'\in\re^{n\times p}$. Set $\bfX_{\mathcal{R}}:=\bfY+\bfE_{\mathcal{R}}$, $\bfX_{\mathcal{R}}'=\bfY'+\bfE_{\mathcal{R}}'$ as well as
$
\mathbf{K}=\left[\bfY+\bfE_{\mathcal{R}}\big|-\sqrt{n}\bfI_{n}\right]
$ 
and
$
\mathbf{K'}=\left[\bfY'+\bfE_{\mathcal{R}}'\big|-\sqrt{n}\bfI_{n}\right].
$
We have
\begin{align}
M(r_1,r_2,\mathbf{K})-M(r_1,r_2,\mathbf{K}')&=
\sup_{\bv\in V(r_1,r_2)}-\left\Vert n^{-1/2}\bfX_{\mathcal{R}}\bb-\btheta
\right\Vert_2
-\sup_{\bv\in V(r_1,r_2)}-\left\Vert n^{-1/2}\bfX_{\mathcal{R}}'b-\btheta
\right\Vert_2\\
&\le \sup_{\bv\in V(r_1,r_2)}\left[
-\left\Vert n^{-1/2}\bfX_{\mathcal{R}}\bb-\btheta
\right\Vert_2
+\left\Vert n^{-1/2}\bfX_{\mathcal{R}}'b-\btheta
\right\Vert_2
\right]\\
&\le n^{-1/2}\sup_{\bb:\|\bfSigma^{1/2}\bb\|_2\le 1}
\left\Vert \mathbf{\widetilde X}_{\mathcal{R}}'{\bfSigma}_S^{1/2}\bb-\mathbf{\widetilde X}_{\mathcal{R}}{\bfSigma}_S^{1/2}\bb\right\Vert_2\\
&\le n^{-1/2}\sup_{\bb:\|\bfSigma^{1/2}\bb\|_2\le 1}\left\Vert{\bfSigma}_S^{1/2}\bb\right\Vert_2\left\Vert\mathbf{\widetilde X}_{\mathcal{R}}-\mathbf{\widetilde X}_{\mathcal{R}}'\right\Vert\\
&\le n^{-1/2}\left\Vert\mathbf{\widetilde X}_{\mathcal{R}}-\mathbf{\widetilde X}_{\mathcal{R}}'\right\Vert_{2,2},
\end{align}
and similarly for $M(r_1,r_2,\mathbf{K}')-M(r_1,r_2,\mathbf{K})$. 

We thus conclude that $\mathbf{\widetilde X}_{\mathcal{R}}\mapsto M(r_1,r_2,\mathbf{K})$ is a $n^{-1/2}$-Lipschitz function. From this and the fact that $\mathbf{\widetilde X}_{\mathcal{R}}\in\re^{n\times p}$ is a standard Gaussian ensemble, we obtain from Theorem 5.6 in \citep{boucheron:lugosi:massart2013} that, for all $t>0$,
\begin{align}
\prob\left[M(r_1,r_2,\mathbf{K})-\esp[M(r_1,r_2,\mathbf{K})]\ge t\right]\le\exp\left(-\frac{nt^2}{2}\right).
\end{align}
Using \eqref{equation:M:centering} and letting $t:=\frac{1}{\sigma}t_{\epsilon}(r_1,r_2)$ above, we prove the claim.
\end{proof}

\subsection{Lifting to the augmented Euclidean space $\re^{p+n}$}\label{ss:peeling}
We now aim in removing the constraints $\Vert \bb\Vert_1\le r_1$ and $\Vert\btheta\Vert_1\le r_2$ by using a standard peeling argument. In our case, we need an ``augmented version'' which is easy to generalize.
\begin{lemma}[A augmented peeling argument]\label{lemma:peeling}
Suppose $g:\re^2\rightarrow\re$ is a nonnegative strictly increasing function for which $g(r_1,r_2)\ge\mu>0$ for all $r_1,r_2>0$. Suppose that $h_1:\re^p\rightarrow\re$ and $h_2:\re^n\rightarrow\re$ are nonnegative increasing functions, $A\subset\re^{n\times p}$ is a nonempty set and $h(\bb,\btheta):=(h_1(\bb),h_2(\btheta))$. Suppose further that $f(\cdot;X):\re^{n\times p}\rightarrow\re$ is random function dependent on some random vector $X$ such that, for some\footnote{Here $h(\bb,\btheta):=(h_1(\bb),h_2(\btheta))\le(r_1,r_2)$ means ``$h_1(\bb)\le r_1$ and $h_2(\btheta)\le r_2$".} $c>0$,
$$
\prob\left\{
\sup_{
\bv\in A,h(\bv)\le(r_1,r_2)
} 
f(\bv;X)>g(r_1,r_2)
\right\}\le\exp\left[-cg^2(r_1,r_2)\right].
$$
Then, for any $\tau>0$,
$$
\prob\left[\exists \bv\in A \mbox{ such that } f(\bv;X)\ge(1+\tau)g(h(\bv))\right]
\le \frac{\exp\left[-(1+\tau)^2c\mu^2\right]}{1-\exp\left[-(1+\tau)^2c\mu^2\right]}.
$$
\end{lemma}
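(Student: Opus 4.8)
The plan is to run a peeling (slicing) argument. Since $g(h(\bv))\ge\mu$ for every $\bv$, I will decompose $A$ into a countable family of shells on which $g(h(\bv))$ is pinned between consecutive powers of $1+\tau$, and on each shell reduce the ``there exists $\bv$'' statement to a fixed-radius supremum that the standing hypothesis already controls. Concretely, for $k\ge1$ I set
\begin{align*}
\mathcal{A}_k:=\big\{\bv\in A:\ (1+\tau)^{k-1}\mu\le g(h(\bv))<(1+\tau)^{k}\mu\big\}.
\end{align*}
Because $g\ge\mu$ these shells cover $A$, so the event in the conclusion is contained in $\bigcup_{k\ge1}\mathcal{E}_k$, where $\mathcal{E}_k$ is the event that some $\bv\in\mathcal{A}_k$ satisfies $f(\bv;X)\ge(1+\tau)g(h(\bv))$. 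The first gain from slicing is that on $\mathcal{A}_k$ one has $f(\bv;X)\ge(1+\tau)g(h(\bv))\ge(1+\tau)^{k}\mu$, so the random function exceeds a \emph{deterministic} level that no longer depends on the individual point.

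Next I would use monotonicity to replace each shell by a single box in the $(r_1,r_2)$-plane. Since $h_1$ and $h_2$ are increasing and $g$ is increasing in both arguments, every $\bv\in\mathcal{A}_k$ satisfies $h(\bv)\le(r_1^{(k)},r_2^{(k)})$ for a radius pair $(r_1^{(k)},r_2^{(k)})$ obtained from the shell, chosen so that $g(r_1^{(k)},r_2^{(k)})$ stays at the level $(1+\tau)^{k}\mu\le f(\bv;X)$ attained on the shell. Then $\mathcal{E}_k$ is contained in $\big\{\sup_{\bv\in A,\,h(\bv)\le(r_1^{(k)},r_2^{(k)})}f(\bv;X)>g(r_1^{(k)},r_2^{(k)})\big\}$, to which the hypothesis applies and gives $\prob(\mathcal{E}_k)\le\exp[-c\,g^{2}(r_1^{(k)},r_2^{(k)})]\le\exp[-k(1+\tau)^{2}c\mu^{2}]$. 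A union bound over $k$ then assembles the pieces:
\begin{align*}
\prob\big[\exists\,\bv\in A:\ f(\bv;X)\ge(1+\tau)g(h(\bv))\big]
\le\sum_{k\ge1}\exp\!\big[-k(1+\tau)^{2}c\mu^{2}\big]
=\frac{\exp[-(1+\tau)^{2}c\mu^{2}]}{1-\exp[-(1+\tau)^{2}c\mu^{2}]},
\end{align*}
which is exactly the claimed bound; the geometric series converges precisely because the exponents grow (at least) linearly in $k$, the linear growth being inherited from the lower bound $g(h(\bv))\ge\mu$ that anchors the smallest shell.

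The step I expect to be the main obstacle is the middle one: converting each shell into a single admissible box while keeping the hypothesis usable. In the classical one-parameter peeling this is immediate, but here $g$ \emph{couples} the two radii $r_1$ (governing $\bb$ through $h_1$) and $r_2$ (governing $\btheta$ through $h_2$), so the naive coordinatewise enclosing box can inflate the value of $g$ at its corner above the level $(1+\tau)^{k}\mu$ guaranteed on the shell, which would break the domination $f>g(r_1^{(k)},r_2^{(k)})$ needed to invoke the hypothesis. Making the argument honest therefore requires choosing the grid of radii finely enough in each coordinate (a ratio slightly below $1+\tau$) that the inflated corner value still stays below $(1+\tau)g(h(\bv))\le f(\bv;X)$. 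This careful balancing of the two radii is the only place where the \emph{augmented} (two-parameter) nature of the cone genuinely enters, and it is what fixes the exponent $(1+\tau)^{2}c\mu^{2}$ governing the geometric ratio.
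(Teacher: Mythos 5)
Your route is the same as the paper's: the paper's entire proof of this lemma is a two-line remark that Lemma 3 of Raskutti--Wainwright--Yu goes through with the single radius replaced by the pair $(r_1,r_2)$ and the factor $2$ replaced by $1+\tau$, and your proposal is a fleshed-out version of exactly that peeling. However, the one step you write out explicitly is the one that breaks for small $\tau$. With the geometric shells $\mathcal{A}_k=\{(1+\tau)^{k-1}\mu\le g(h(\bv))<(1+\tau)^{k}\mu\}$, the enclosing radii must be chosen so that $g(r_1^{(k)},r_2^{(k)})=(1+\tau)^{k}\mu$ (any larger and the domination $f>g(r_1^{(k)},r_2^{(k)})$ on the shell is lost), so the hypothesis yields $\prob(\mathcal{E}_k)\le\exp[-c(1+\tau)^{2k}\mu^2]$, and your claimed bound $\exp[-k(1+\tau)^{2}c\mu^{2}]$ requires $(1+\tau)^{2(k-1)}\ge k$ for \emph{every} $k\ge1$. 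The binding case is $k=2$, which demands $(1+\tau)^2\ge 2$, i.e.\ $\tau\ge\sqrt{2}-1$. In the original lemma the factor is $2$ and $4^{k-1}\ge k$ does hold, but for the small $\tau$ that makes this lemma useful downstream (the paper instantiates $\tau=0.02$) the shell bounds $q^{(1+\tau)^{2(k-1)}}$, with $q=e^{-(1+\tau)^{2}c\mu^{2}}$, decay far more slowly than $q^{k}$, and they sum to much more than $q/(1-q)$: for $q\approx 1/2$ and $\tau=0.02$ the first twenty of them already add up to more than $4$. Your stated justification --- that the exponents ``grow at least linearly in $k$'' --- is therefore wrong as a uniform-in-$k$ statement: the exponents $c(1+\tau)^{2k}\mu^2$ are eventually superlinear in $k$, but they are sublinear precisely over the range of $k$ that dominates the sum. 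This is a genuine gap in the argument (one that, to be fair, the paper's own one-line proof silently inherits from its ``replace $2$ by $1+\tau$'' remark).

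A secondary point: the obstacle you single out as the main one --- the coupling of the two radii through $g$ --- is in fact benign in the intended application, since there $g=(1+1/\sigma)t_{\epsilon}$ is affine and separable in $(r_1,r_2)$, so each coordinate can be peeled independently; this is exactly the ``Cartesian product'' modification the paper invokes. What does require care in the two-parameter setting is the union bound over the resulting two-dimensional grid of shells indexed by $(k_1,k_2)$, which, done naively, produces a double geometric sum and hence (at best) the square of the one-dimensional bound rather than the bound as stated; neither your proposal nor the paper addresses this.
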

\begin{proof}
The proof follows closely the proof of Lemma 3 in [5] with two almost immediate changes. First, we note that the same argument holds true for the Cartesian product of the sets 
$\{\bv=(\bb,\btheta)\in A:h_1(\bb)\le r_1\}$ and $\{\bv=(\bb,\btheta)\in A:h_2(\btheta)\le r_2\}$. Second, the argument still holds true with the factor $2$ replaced by\footnote{This allows a better bias-variance trade-off in order to obtain a sharper restricted eigenvalue constant of Theorem 1.} $1+\tau$.
\end{proof}

%
%

By recalling the definition of $V(r_1,r_2)$ in \eqref{equation:V(r1,r2)}, we now use the consecutive upper bounds in \eqref{equation:uncontamination:bound} and Lemma \ref{lemma:concentration:mean}, \eqref{equation:t(r1,r2)} and Lemma \ref{lemma:peeling} with the following setup: for $\epsilon\in(0,3/4)$, $\alpha,\beta,\sigma>0$, we take 
\begin{align}
h_1(\bb)&:=\Vert \bb\Vert_1,\\
h_2(\btheta)&:=\Vert\btheta\Vert_1,\\
A &:=\left\{(\bb,\btheta)\in\re^p\times\re^n:\Vert{\bfSigma}_S^{1/2}\bb\Vert_2^2+\Vert\btheta\Vert_2^2=1\right\},\\
f(v;\bfX)&:=-\left\Vert\mathbf{K}\bv\right\Vert_2+1+\left(\frac{1}{2\alpha}+\frac{1}{2\beta}\right)-\frac{\sigma_{\max}(\bfE_{\mathcal{D}}{\bfSigma}_S^{-1/2})}{\sqrt{n}},\\
g(r_1,r_2)&:=(1+1/\sigma)t_{\epsilon}(r_1,r_2),
\end{align}
and
$$
\mu:=\left(1+1/\sigma\right)\left[1-\frac{(3/4-\epsilon)}{\sqrt{2}}\right],\quad\quad
c:=\frac{n}{2(1+\sigma)^2}.
$$
For simplicity, we recall the definitions
\begin{align}
\rho:=(1+\tau)(1+1/\sigma),
\label{equation:def:rho}
\end{align}
and, for any matrix $\bfA \in\re^{n\times p}$, 
\begin{align}
\mathsf{C}_{n}(\bfA )&:=\rho\left[\frac{(3/4-\epsilon)}{\sqrt{2}}
(1-e^{-\frac{n\epsilon^2}{2}})-(1-\rho^{-1})\right]\nonumber\\
&-\left(\frac{1}{2\alpha}+\frac{1}{2\beta}+
\frac{\sigma_{\max}(\bfA )}{\sqrt{n}}\right).
\end{align}

We thus obtain from Lemma \ref{lemma:peeling} that, for any $n\ge10$, $\epsilon\in(0,3/4)$ and $\alpha,\beta,\sigma,\tau>0$, with probability at least 
\begin{align}
1-\frac{\exp\left[-\frac{(1+\tau)^2\mu_{\epsilon}^2}{2\sigma^2}n\right]}{1-\exp\left[-\frac{(1+\tau)^2\mu_{\epsilon}^2}{2\sigma^2}n\right]},
\label{equation:probability}
\end{align}
the following bound holds: for all $v=(\bb,\btheta)\in A$,  
\begin{align}
-\left\Vert\bfM \bv\right\Vert_2+1+\left(\frac{1}{2\alpha}+\frac{1}{2\beta}\right)-\frac{\sigma_{\max}(\bfE_{\mathcal{D}}{\bfSigma}_S^{-1/2})}{\sqrt{n}}\le \rho t_\epsilon\left(\Vert \bb\Vert_1,\Vert\btheta\Vert_1\right).
\label{equation:bound:peeled}
\end{align}

We also remark that the expression in \eqref{equation:probability} can be lower bounded by
$
1-2\exp\left[-\frac{(1+\tau)^2\mu_{\epsilon}^2}{2\sigma^2}n\right],
$
as long as
$$
n\ge\left(\frac{2\sigma^2\log 2}{(1+\tau)^2\mu_{\epsilon}^2}\right).	
$$
We shall use this fact, definition \eqref{equation:t(r1,r2)}, the fact that for all nonzero $\bv=[\bb;\btheta]\in\re^{p+n}$, 
$$
\frac{\left[
\begin{array}{c}
\bb;
\btheta
\end{array}
\right]}
{\sqrt{\Vert{\bfSigma}_S^{1/2}\bb\Vert_2^2+\Vert\btheta\Vert_2^2}}
\in A,
$$ 
relation \eqref{equation:bound:peeled} and \emph{homogeneity of the norm} in $\re^{p+n}$.

The facts listed in the previous paragraph imply the following property: for all $\epsilon\in(0,3/4)$, positive $\alpha$, $\beta$, $\sigma$ and $\tau$ and $n\ge\left(\frac{2\sigma^2\log 2}{(1+\tau)^2\mu_{\epsilon}^2}\right)\vee10$, with probability at least 
$
1-2\exp\left[-\frac{(1+\tau)^2\mu_{\epsilon}^2}{2\sigma^2}n\right],
$ 
for all $\bv=[\bb;\btheta]\in\re^{p+n}$, 
\begin{align}
\left\Vert\bfM \bv\right\Vert_2 &\ge  
\mathsf{C}_{n}(\bfE_{\mathcal{D}}{\bfSigma}_S^{-1/2})
\left\Vert\left[ {\bfSigma}_S^{1/2}\bb;\btheta\right]\right\Vert_2-\rho\left[(2+\alpha\sqrt{2})\varrho({\bfSigma}_S)\Vert \bb\Vert_1\sqrt{\frac{\log p}{n}}+\beta\Vert\btheta\Vert_1\sqrt{\frac{2\log n}{n}}\right].
\label{equation:bound:peeled:2}
\end{align}
In the above expressions, $\mathsf{C}_{n}(\bfE_{\mathcal{D}}{\bfSigma}_S^{-1/2})$ is defined in \eqref{equation:def:CA}.

\subsection{Final details: proof of Theorem \ref{theorem:general}}\label{ss:details}
The obtained bound \eqref{equation:bound:peeled:2} concludes the proof of Theorem \ref{theorem:general:vectors} for vectors $v\in\re^{p+n}$. We now conclude the proof of Theorem \ref{theorem:general} for matrix parameters. Note that
$$
\left\Vert [{\bfSigma}_S^{1/2}\bb;\btheta]\right\Vert_2
\ge\Vert{\bfSigma}_S^{1/2}\bb\Vert_2\vee\Vert\btheta\Vert_2.
$$
To ease notation, we write $\mathsf{C}_{n}$ 
instead of $\mathsf{C}_{n}(\bfE_\mathcal{D}{\bfSigma}_S^{-1/2})$. 
Under the conditions of Theorem \ref{theorem:general:vectors}, we obtain that, with probability at least 
$
1-2\exp\left[-\frac{(1+\tau)^2\mu_{\epsilon}^2}{2\sigma^2}n\right],
$
for every $\mathbf{V}=[\bfB;{\bfTheta}]$ and every $j\in[p]$, we have
\begin{align}
\Vert\bfM\mathbf{V}_{\bullet,j}\Vert_2
&\ge \mathsf{C}_{n}
\left\Vert
\begin{bmatrix}
{\bfSigma}^{1/2}_S\bfB_{\bullet,j}\\
{\bfTheta}_{\bullet,j}
\end{bmatrix}
\right\Vert_2
-\rho\left[(2+\alpha\sqrt{2})\varrho({\bfSigma}_S)\Vert\bfB_{\bullet,j}\Vert_1\sqrt{\frac{\log p}{n}}+\beta\Vert{\bfTheta}_{\bullet,j}\Vert_1\sqrt{\frac{2\log n}{n}}\right]\\
&\ge \mathsf{C}_{n}
\left(\Vert{\bfSigma}^{1/2}_S\bfB_{\bullet,j}\Vert_2
\bigvee\Vert{\bfTheta}_{\bullet,j}\Vert_2\right)
-\rho\left[(2+\alpha\sqrt{2})\varrho({\bfSigma}_S)\Vert\bfB_{\bullet,j}\Vert_1\sqrt{\frac{\log p}{n}}+\beta\Vert{\bfTheta}_{\bullet,j}\Vert_1\sqrt{\frac{2\log n}{n}}\right].
\end{align}
We claim that the above set of column-wise inequalities implies that, for all 
$\mathbf{V}\in\re^{p+n, n}$,
\begin{align}
\Vert\bfM\mathbf{V}\Vert_{2,2} 
&\ge {\mathsf{C}_{n}}
\left(\Vert{\bfSigma}^{1/2}_S\bfB\Vert_{2,2}
\bigvee\Vert{\bfTheta}\Vert_{2,2}\right) 
-\rho\left[(2+\alpha\sqrt{2})\varrho({\bfSigma}_S)\Vert\bfB\Vert_{1,1}\sqrt{\frac{\log p}{n}}+\beta\Vert{\bfTheta}\Vert_{2,1}\sqrt{\frac{2\log n}{n}}\right].
\label{equation:final:bound}
\end{align}
This will complete the proof of \Cref{theorem:general}.

Indeed, in the previous inequality, moving the negatives terms in the RHS to the LHS, squaring, summing over $j\in[p]$ and applying the Minkowski inequality, we get
\begin{align}
 \Vert\bfM\mathbf{V}\Vert_{2,2}&+\rho\left(2+\alpha\sqrt{2}\right)\varrho({\bfSigma}_S)
\sqrt{\frac{\log p}{n}}\bigg(\sum_{j=1}^p\Vert\bfB_{\bullet,j}\Vert_1^2\bigg)^{1/2}
+\rho\beta\sqrt{\frac{2\log n}{n}}\bigg(\sum_{j=1}^p
\Vert{\bfTheta}_{\bullet,j}\Vert_1^2\bigg)^{1/2}\\
&\ge \mathsf{C}_{n}\big(\Vert{\bfSigma}^{1/2}_S\bfB\Vert_{2,2}\vee
\Vert{\bfTheta}\Vert_{2,2}\big).
\end{align}
It is not difficult to see that the first sum in the LHS can be upper bounded by $\Vert\bfB\Vert_{1,1}^2$. Finally, the second sum in the LHS may be bounded as
follows
\begin{align}
\sum_{j=1}^p\Vert{\bfTheta}_{\bullet,j}\Vert_1^2 &=
\sum_{j=1}^p\bigg(\sum_{i=1}^n|{\bfTheta}_{ij}|\bigg)^2\\
&=\sum_{j=1}^p\sum_{i=1}^n\sum_{k=1}^n|{\bfTheta}_{ij}||{\bfTheta}_{kj}|\\
&=\sum_{i=1}^n\sum_{k=1}^n\bigg(\sum_{j=1}^p|{\bfTheta}_{ij}||{\bfTheta}_{kj}|\bigg)\\
&\le \sum_{i=1}^n\sum_{k=1}^n\bigg({\sum_{j=1}^p|{\bfTheta}_{ij}|^2}\bigg)^{1/2}
\bigg({\sum_{j=1}^p|{\bfTheta}_{kj}|^2}\bigg)^{1/2}\\
&=\sum_{i=1}^n\sum_{k=1}^n\Vert{\bfTheta}_{i,\bullet}\Vert_2\Vert{\bfTheta}_{k,\bullet}\Vert_2\\
&=\bigg(\sum_{i=1}^n\Vert{\bfTheta}_{i,\bullet}\Vert_2\bigg)^2=
\Vert{\bfTheta}\Vert_{2,1}^2.
\end{align}
We thus conclude \eqref{equation:final:bound}, by using the previous bounds. 
This completes the proof of \Cref{theorem:general}. 

\section{Appendix}

\begin{proof}[Proof of Lemma \ref{lemma:aux1}]
The proof follows essentially by Lemmas 1-2 in [5] using Gordon's inequality and the Gaussian concentration inequality. We just make some minor remarks regarding numerical constants. First, since we only need a one sided tail inequality, the probability of the event can be improved to $1-\exp(-nt^2/2)$ rather than $1-2\exp(-nt^2/2)$ as given in the mentioned article. Second, the constant 2 in the inequality above is a consequence of Theorem 2.5 in \citep{boucheron:lugosi:massart2013} (while in [5] the constant presented is $9$).
\end{proof}

For the proof of Lemma \ref{lemma:aux2}, we recall Slepian's expectation comparison inequality, also known as Sudakov-Fernique's inequality  
\citep{sudakov1971,fernique1975,chatterjee,ledoux:talagrand1991}.
\begin{thm}[Sudakov-Fernique's inequality]\label{thm:slepian}
Let $\{X_{\bu}\}_{U}$ and $Y=\{Y_{\bu}\}_{U}$ be continuous Gaussian processes such that,  for all $u,u'\in U$, $\esp[X_{\bu}]=\esp[Y_{\bu}]$ and
$$
\esp\left[(X_{\bu}-X_{\bu'})^2\right]\le\esp\left[(Y_{\bu}-Y_{\bu'})^2\right].
$$
Then
$$
\esp\left[\sup_{\bu\in U}X_{\bu}\right]\le\esp\left[\sup_{\bu\in U}Y_{\bu}\right].
$$
\end{thm}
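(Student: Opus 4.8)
The plan is to prove the inequality by the classical Gaussian interpolation (``smart path'') argument, deferring all technicalities to a reduction to finitely many indices. First I would reduce to the case where $U$ is finite: by continuity of the sample paths and separability one can pick an increasing sequence of finite sets $U_1\subset U_2\subset\cdots$ whose union is dense, so that $\sup_{\bu\in U_N}X_{\bu}\uparrow\sup_{\bu\in U}X_{\bu}$ almost surely, and likewise for $Y$; monotone convergence then passes the finite-case inequality to the limit, the hypotheses restricting trivially to any subset of $U$. Next I would replace the maximum by the smooth soft-max $f_\beta(x):=\beta^{-1}\log\sum_{\bu}e^{\beta x_{\bu}}$, which satisfies $\max_{\bu}x_{\bu}\le f_\beta(x)\le \max_{\bu}x_{\bu}+\beta^{-1}\log|U|$; it therefore suffices to prove $\esp[f_\beta(X)]\le\esp[f_\beta(Y)]$ for each $\beta>0$ and then let $\beta\to\infty$.

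Writing $m_{\bu}:=\esp[X_{\bu}]=\esp[Y_{\bu}]$, I would decompose $X_{\bu}=m_{\bu}+\widetilde X_{\bu}$ and $Y_{\bu}=m_{\bu}+\widetilde Y_{\bu}$ with $\widetilde X,\widetilde Y$ centred, realized independently on a product space, and introduce the interpolating Gaussian vector $Z_{\bu}(t):=m_{\bu}+\sqrt{t}\,\widetilde X_{\bu}+\sqrt{1-t}\,\widetilde Y_{\bu}$ for $t\in[0,1]$, so that $Z(0)\stackrel{d}{=}Y$, $Z(1)\stackrel{d}{=}X$, while $\esp[Z_{\bu}(t)]=m_{\bu}$ remains fixed. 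Setting $\phi(t):=\esp[f_\beta(Z(t))]$, the goal becomes $\phi(1)\le\phi(0)$, which I would obtain by showing $\phi'(t)\le0$. Differentiating under the expectation, with $\dot Z_{\bu}(t)=\tfrac{1}{2\sqrt t}\widetilde X_{\bu}-\tfrac{1}{2\sqrt{1-t}}\widetilde Y_{\bu}$, and applying the Gaussian integration-by-parts (Stein) identity to the centred jointly Gaussian family $(\widetilde X,\widetilde Y)$—crucially using $\covar(\widetilde X_{\bu},\widetilde Y_{\bv})=0$ so that the cross terms drop—the $\sqrt t$ and $\sqrt{1-t}$ factors cancel against the derivative prefactors and one lands on the clean formula $\phi'(t)=\tfrac12\sum_{\bu,\bv}\big(\covar(X_{\bu},X_{\bv})-\covar(Y_{\bu},Y_{\bv})\big)\,\esp\big[\partial^2_{\bu\bv}f_\beta(Z(t))\big]$.

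It remains to fix the sign. The Hessian of the soft-max is $\partial^2_{\bu\bv}f_\beta=\beta\,(p_{\bu}\delta_{\bu\bv}-p_{\bu}p_{\bv})$, where $p_{\bu}=e^{\beta Z_{\bu}}/\sum_{\bw}e^{\beta Z_{\bw}}$ are Gibbs weights summing to one. Writing $\gamma_{\bu\bv}:=\covar(X_{\bu},X_{\bv})-\covar(Y_{\bu},Y_{\bv})$ and using $\sum_{\bu}p_{\bu}=1$ to symmetrize, the inner bracket reduces by the algebraic identity $\sum_{\bu}\gamma_{\bu\bu}p_{\bu}-\sum_{\bu,\bv}\gamma_{\bu\bv}p_{\bu}p_{\bv}=\tfrac12\sum_{\bu,\bv}(\gamma_{\bu\bu}+\gamma_{\bv\bv}-2\gamma_{\bu\bv})\,p_{\bu}p_{\bv}$. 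The hypothesis $\esp[(X_{\bu}-X_{\bv})^2]\le\esp[(Y_{\bu}-Y_{\bv})^2]$ is precisely $\gamma_{\bu\bu}+\gamma_{\bv\bv}-2\gamma_{\bu\bv}\le0$, and since $p_{\bu}p_{\bv}\ge0$ the whole quantity is nonpositive pointwise; taking expectations yields $\phi'(t)\le0$. Letting $\beta\to\infty$ and then $N\to\infty$ completes the proof. The main obstacle is the rigorous justification of the integration-by-parts step—differentiating under the expectation and invoking Stein's identity for the non-polynomial map $f_\beta$—which requires the mild integrability and polynomial-growth bounds that legitimize these manipulations; this is exactly why one first passes to the smooth, globally Lipschitz soft-max and to a finite index set, where all the required bounds are immediate.
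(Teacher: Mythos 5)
Your proof is correct, but note that the paper does not actually prove this statement: Theorem~\ref{thm:slepian} is merely \emph{recalled} as a classical result, with pointers to the literature (Sudakov, Fernique, Chatterjee, Ledoux--Talagrand), and is then used as a black box in the proof of Lemma~\ref{lemma:aux2}. What you have written is a genuine, self-contained derivation by the standard Gaussian interpolation (``smart path'') argument: reduction to finite index sets via separability and monotone convergence, soft-max smoothing $f_\beta$, the interpolation $Z_{\bu}(t)=m_{\bu}+\sqrt{t}\,\widetilde X_{\bu}+\sqrt{1-t}\,\widetilde Y_{\bu}$, Gaussian integration by parts giving $\phi'(t)=\tfrac12\sum_{\bu,\bv}\gamma_{\bu\bv}\,\esp\big[\partial^2_{\bu\bv}f_\beta(Z(t))\big]$, and the symmetrization identity that turns the Hessian pairing into a nonpositive combination of the increment deficits $\gamma_{\bu\bu}+\gamma_{\bv\bv}-2\gamma_{\bu\bv}\le 0$. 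The key steps all check out: the equal-means hypothesis is exactly what legitimizes the decomposition $X_{\bu}=m_{\bu}+\widetilde X_{\bu}$, $Y_{\bu}=m_{\bu}+\widetilde Y_{\bu}$ with a common drift; independence of $\widetilde X$ and $\widetilde Y$ kills the cross terms in Stein's identity and produces the cancellation of the $t^{-1/2}$, $(1-t)^{-1/2}$ prefactors; and the algebraic identity you invoke is valid because $\sum_{\bu}p_{\bu}=1$ and $\gamma$ is symmetric. Two small points to make explicit in a full write-up: (i) $\phi$ is continuous on $[0,1]$ but differentiable only on the open interval, which suffices to conclude $\phi(1)\le\phi(0)$; (ii) in the monotone-convergence step, apply the theorem to $\sup_{U_N}X-X_{\bu_0}\ge 0$ for a fixed index $\bu_0$, so that possibly infinite suprema are handled correctly. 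What your route buys is an elementary, fully self-contained justification of the tool; what the paper's choice buys is brevity, since the inequality is classical and plays only an auxiliary role there.
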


%

\begin{proof}[Proof of Lemma \ref{lemma:aux2}]
Note that 
\begin{align}
\sup_{[\bb;\btheta]\in V(r_1,r_2)}\bb^\top(\bfX_{\mathcal{R}}^{(n)})^\top\btheta
\le\sup_{(\bb,\btheta)\in \overline V_1(r_1)\times \overline V_2(r_2)}\bb^\top(\bfX_{\mathcal{R}}^{(n)})^\top\btheta.
\label{lemma:aux2:eq0}
\end{align}

Since $\mathbf{Y}$ and $\mathbf{E}_{\mathcal{R}}$ are independent random matrices whose rows are i.i.d. Gaussian vectors, there exists a standard Gaussian ensemble $\mathbf{\widetilde X}_{\mathcal{R}}\in\re^{n\times p}$ such that $\mathbf{X}_{\mathcal{R}}=\mathbf{\widetilde X}_{\mathcal{R}}{\bfSigma}_S^{1/2}$. In the following, we set $\bfX:=\mathbf{\widetilde X}_{\mathcal{R}}$ for convenience. For each $(\bb,\btheta)\in\overline V_1(r_1)\times \overline V_2(r_2)$, we define
\begin{align}
W_{\bb,\btheta}&:=({\bfSigma}_S^{1/2} \bb)^\top\bfX^\top \btheta,\\
Z_{\bb,\btheta}&:=({\bfSigma}_S^{1/2} \bb)^\top\bfX^\top\overline{\btheta}+\overline \bb^\top\mathbf{\overline{X}}^\top\btheta,
\end{align}
where $\overline{\btheta}\in\mathbb{S}^{n-1}$ and $\overline{\bb}\in\mathbb{S}^{p-1}$ are fixed and $\mathbf{\overline X}$ is an independent copy of $\bfX$. Since $\bfX$ and $\mathbf{\overline X}$ are independent centered Gaussian ensembles, $(\bb,\btheta)\mapsto W_{\bb,\btheta}$ and $(\bb,\btheta)\mapsto Z_{\bb,\btheta}$ define centered continuous Gaussian processes $W$ and $Z$ indexed over $\overline V_1(r_1)\times \overline V_2(r_2)$. 

We shall now compute the increments of $W$. Setting $\tilde\bb:={\bfSigma}_S^{1/2}\bb$, we get
\begin{align}
W_{\tilde\bb,\btheta}-W_{\tilde \bb',\btheta'}=\sum_{i=1}^p-(\bfX_{\bullet,i})^\top(\tilde\bb_i\btheta-\tilde\bb_i'\btheta').
\end{align}
The above sum is the sum of $p$ independent random variables $\bfX_{\bullet,i}^\top(\tilde\bb_i\btheta-\tilde\bb_i'\btheta')\sim{\calN}(0,\Vert \tilde\bb_i\btheta-\tilde\bb_i'\btheta'\Vert_2^2)$. Hence,
\begin{align}
\esp\left[\left(\sum_{i=1}^p\bfX_{\bullet,i}^\top(\tilde\bb_i\btheta-\tilde\bb_i'\btheta')\right)^2\right]&=
\sum_{i=1}^p\Vert \tilde\bb_i\btheta-\tilde\bb_i'\btheta'\Vert_2^2=\Vert \tilde\bb\btheta^\top-\tilde \bb'(\btheta')^\top\Vert_{2,2}^2 \\
&=\Vert (\tilde\bb-\tilde \bb')\btheta^\top+\tilde \bb'(\btheta-\btheta')^\top\Vert_{2,2}^2 \\
&=\sum_{i=1}^p\sum_{j=1}^n[(\tilde\bb_i-\tilde \bb'_i)\btheta_j+\tilde \bb'_i(\btheta_j-\btheta'_j)]^2 \\
&=\Vert \tilde\bb-\tilde \bb'\Vert_2^2\Vert \btheta\Vert_2^2+\Vert \tilde \bb'\Vert_2^2\Vert \btheta-\btheta'\Vert_2^2-2(\Vert \tilde \bb'\Vert_2^2-\tilde \bb^\top \bb')(\Vert \btheta\Vert_2^2-\btheta^\top \btheta') \\
&\le \Vert \tilde\bb-\tilde \bb'\Vert_2^2+\Vert \btheta-\btheta'\Vert_2^2,
\label{lemma:aux2:eq1}
\end{align}
using Cauchy-Schwarz and the facts that $\Vert \btheta\Vert_2^2\le1$ and $\Vert \tilde \bb'\Vert_2^2\le1$.

We now consider the increments of $Z$. We have
\begin{align}
Z_{\tilde\bb,\btheta}-Z_{\tilde \bb',\btheta'}&=\sum_{i=1}^p\left(\tilde\bb_i-\tilde\bb_i'\right)\bfX_{\bullet,i}^\top\overline{\btheta}
+\sum_{i=1}^p\overline{\bb}_i\mathbf{\overline X}_{\bullet,i}^\top(\btheta-\btheta').
\end{align}
We now use the facts that $\{\bfX_{\bullet,i}\}_{i\in[p]}$ and $\{\mathbf{\overline X}_{\bullet,i}\}_{i\in[p]}$ are i.i.d. centered sequences independent of each other, the first sum is a sum of $p$ i.i.d. ${\calN}(0,|\tilde\bb_i-\tilde \bb'_i|^2)$ since $\Vert\overline{\btheta}\Vert_2=1$ while the second sum is a sum of $p$ i.i.d. ${\calN}(0,|\overline\bb_i|^2\Vert \btheta-\btheta'\Vert_2^2)$. From these facts and $\Vert\overline{\bb}\Vert_2=1$, we obtain that
\begin{align}
Z_{\tilde\bb,\btheta}-Z_{\tilde \bb',\btheta'}=\Vert \tilde\bb-\tilde \bb'\Vert_2^2+\Vert \btheta-\btheta'\Vert_2^2.
\label{lemma:aux2:eq2}
\end{align}

From \eqref{lemma:aux2:eq1}-\eqref{lemma:aux2:eq2}, we conclude that the centered Gaussian processes $W$ and $Z$ satisfy the conditions of Theorem \ref{thm:slepian}. Hence
\begin{align}
\esp\left[\sup_{(\bb,\btheta)\in\overline V_1(r_1)\times\overline V_2(r_2)}W_{\bb,\btheta}\right]&\le \esp\left[\sup_{(\bb,\btheta)\in\overline V_1(r_1)\times\overline V_2(r_2)}Z_{\bb,\btheta}\right]\\
&=\esp\left[\sup_{\bb\in\overline V_1(r_1)}({\bfSigma}_S^{1/2}\bb)^\top\bfX^\top\overline{\btheta}\right]+
\esp\left[\sup_{\btheta\in\overline V_2(r_2)}\overline{\bb}^\top\bfX^\top\btheta\right].
\end{align}
The first expectation can be bounded as 
\begin{align}
\esp\left[\sup_{\bb\in\overline V_1(r_1)}({\bfSigma}_S^{1/2}\bb)^\top\bfX^\top\overline{\btheta}\right]
\le\sup_{\bb\in\overline V_1(r_1)}\Vert \bb\Vert_1\cdot \esp\left[\Vert{\bfSigma}_S^{1/2}\bfX^\top\overline{\btheta}\Vert_\infty\right]
\le r_1\esp\left[\Vert{\bfSigma}_S^{1/2}\bfX^\top\overline{\btheta}\Vert_\infty\right].
\end{align}
Since $\bfX$ is a standard Gaussian ensemble and $\Vert\overline{\btheta}\Vert_2=1$, we have that, for all $i\in[p]$, $({\bfSigma}_S^{1/2}\bfX^\top\overline{\btheta})_i$ is a centered Gaussian random variable with variance $({\bfSigma}_S)_{ii}$. This fact and Theorem 2.5 in \citep{boucheron:lugosi:massart2013} imply that
$$
\esp\left[\Vert{\bfSigma}_S^{1/2}\bfX^\top\overline{\btheta}\Vert_\infty\right]\le \rho({\bfSigma}_S)\sqrt{2\log p},
$$ 
where $\varrho({\bfSigma}_S):=\max_{i\in[p]}({\bfSigma}_S)_{ii}$.

The second expectation can be bounded as 
\begin{align}
\esp\left[\sup_{\btheta\in\overline V_2(r_2)}\btheta^\top\bfX\overline{\bb}\right]
\le r_2\esp\left[\Vert\bfX\overline{\bb}\Vert_\infty\right] 
\le r_2\sqrt{2\log n},
\end{align}
by analogous reasons using that $\bfX\overline{\bb}\sim{\calN}_n(0,\bfI_n)$ which follows from the facts that $\mathbf{\bfX}\in\re^{n\times p}$ is a standard Gaussian ensemble and $\overline{\bb}\in\mathbb{S}^{p-1}$.

The claim of the lemma is proved by assembling \eqref{lemma:aux2:eq0} and the three previous inequalities and then normalizing by $\sqrt{n}$. 
\end{proof}

\end{document}